\newcommand{\h}{\hat{h}}
\newcommand{\hlambda}{\hat{\lambda}}
\newcommand{\QQ}{\mathbb{Q}}
\newcommand{\CC}{\mathbb{C}}
\newcommand{\PP}{\mathbb{P}}
\newcommand{\RR}{\mathbb{R}}
\newcommand{\ZZ}{\mathbb{Z}}
\newcommand{\Bcal}{\mathcal{B}}
\newcommand{\Ocal}{\mathcal{O}}
\newcommand{\Rcal}{\mathcal{R}}
\newcommand{\scD}{\mathscr{D}}
\renewcommand{\epsilon}{\varepsilon}
\newtheorem{proposition}{Proposition}
\newtheorem{theorem}[proposition]{Theorem}
\newtheorem{lemma}[proposition]{Lemma}
\newtheorem*{definition}{Definition}
\theoremstyle{remark}
\newtheorem*{remark}{Remark}
\newtheorem*{ack}{Acknowledgements}
\title{Lower bounds on the canonical height associated to the morphism $\phi(z)= z^d+c$\\ {\small (Short title: Lower bounds on a certain canonical height)}}
\author{Patrick Ingram\footnote{The author was supported by a PDF grant from NSERC.\newline { Mathematics Subject Classification: 11G50, 37F10, 11G99}\newline Keywords: canonical height, arithmetic dynamics, iterates of polynomials}}
\begin{document}

\maketitle

\begin{abstract}
A lower bound is obtained of the canonical height associated to the morphism $\phi(z)=z^d+c$ evaluated at wandering points $\alpha$.  The lower bound is of the form $Ch(c)$, for some constant $C$ depending on the the number of primes of bad reduction for $\phi$, and the degree of the number field $\QQ(c, \alpha)$.
\end{abstract}

Suppose that $\phi:\PP^1\rightarrow\PP^1$ is a rational morphism of degree at least 2, defined over a number field $K$.  One may associate to $\phi$, by a construction due essentially to Tate (see  \cite{cs} for a general construction), a \emph{canonical height} function $\h_\phi:\PP^1(\overline{K})\rightarrow\RR$.  This height function satisfies the two properties
$$\h_\phi(\phi(\alpha))=\deg(\phi)\h_\phi(\alpha)\qquad\text{and}\qquad \h_\phi(\alpha)=h(\alpha)+O(1),$$
where $h$ is the usual absolute logarithmic height.  It is reasonably easy to show, from the two properties above, that $\h_\phi(\alpha)=0$ just in case $\phi^j(\alpha)=\phi^i(\alpha)$ for some $j\neq i$, that is, just in case $\alpha$ is a \emph{pre-periodic} point for $\phi$.  It is natural to ask how small the value of $\h_\phi$ can be at $K$-rational points which are not pre-periodic, i.e., at \emph{wandering points} for $\phi$.  We will examine this question for morphisms of the form $\phi(z)=z^d+c$, with $d\geq 2$, and are interested primarily in the dependence on $c$ (rather than the dependence on $[K:\QQ]$; see \cite{baker}).

The canonical heights mentioned above are analogous to the canonical heights on elliptic curves (and more general abelian varieties) studied by N\'eron and Tate.  The analogous question in this context, namely how small the canonical height of a  non-torsion rational point on an elliptic curve may be, is the subject of a conjecture of Lang.  Specifically, Lang conjectured that the height of a point of infinite order in $E(K)$ is bounded below by a constant multiple of $\max\{h(j_E), \log |\operatorname{Norm}_{K/\QQ}\scD_{E/K}|, 1\}$, where $j_E$ and $\scD_{E/K}$ are the $j$-invariant and minimal discriminant of $E/K$ respectively (see \cite{aec} for definitions of these terms).  Silverman~\cite{js:lang} has given a partial solution to this conjecture, proving that (for a non-torsion point $P$ on an elliptic curve $E$)
$$\h(P)\geq C\max\{h(j_E), \log |\operatorname{Norm}_{K/\QQ}\scD_{E/K}|, 1\},$$
where $C$ depends on $[K:\QQ]$, as well as the number of primes at which $E$ has split multiplicative reduction.

In the case of a morphism $\phi:\PP^1\rightarrow\PP^1$ defined over a number field $K$, Silverman \cite[p.~221]{js:ads} has made a conjecture analogous to Lang's, which suggests a lower bound on the canonical height of $K$-rational wandering points depending on various data related to $\phi$.  For our morphisms $\phi(z)=z^d+c$, a reasonable version of this conjecture would be that for any wandering point $\alpha\in\PP^1(K)$,
$$\h_\phi(\alpha)\geq C\max\{h(c), 1\}$$
for some constant $C$ depending only on $d$ and the field $K$.
  We will prove a weaker version of this claim, similar in flavour to the result of Silverman mentioned above.  
  In subsequent work, Hindry and Silverman \cite{hs} showed that Lang's Conjecture for elliptic curves, in full uniformity, follows from the $abc$ Conjecture of Masser and Oesterl\'{e}.  It would be interesting to see if such an assumption would give a uniform result in this case as well, although there are several features of elliptic curves (in particular, the Kodaira-N\'{e}ron classification of types of bad reduction) exploited by Hindry and Silverman which have no obvious analogue in this setting.
  
    Throughout, we will consider $\phi$ simply as an affine map, as the point at infinity is a totally ramified fixed point of $\phi$.  We suppose that every non-archimedean valuation $v\in M_K$ is normalized, so that $v(\pi)=1$ for $\pi$ a uniformizer of the prime corresponding to $v$.  The places of bad reduction for $\phi$, in the sense of \cite[Chapter~2]{js:ads}, are precisely those for which $v(c)<0$.  We will distinguish further between bad primes of \emph{Type II}, where $v(c)$ is divisible by $d$, and bad primes of \emph{Type I}, where this is not the case.  Our main result is the following:

\begin{theorem}
Let $K$ be a number field, and $c\in K$.  Then there is a constant $C>0$ depending only on $d$, the degree $[K:\QQ]$, and the number of Type II places for $\phi(z)=z^d+c$, such that for all wandering points $\alpha\in K$,
$$\h_\phi(\alpha)\geq C\max\{h(c), 1\}.$$
\end{theorem}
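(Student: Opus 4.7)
The plan is to decompose the canonical height as a sum of local contributions
$$\h_\phi(\alpha) = \sum_{v \in M_K} \hlambda_{\phi,v}(\alpha), \qquad \hlambda_{\phi,v}(\alpha) := \lim_{n\to\infty} d^{-n}\log^+|\phi^n(\alpha)|_v,$$
in which each term is non-negative, and then to compare the sum term-by-term to $h(c) = \sum_v \log^+|c|_v$. At non-archimedean $v$ with $v(c)\geq 0$ there is nothing to prove, since $\log^+|c|_v=0$.

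At a non-archimedean Type I place, the divisibility obstruction ``$d \mid v(x^d)$ but $d \nmid v(c)$'' forces $v(\phi(x)) = \min(dv(x), v(c))$ with no cancellation. A short induction from this identity gives $v(\phi^n(\alpha)) = d^{n-1} v(c)$ for $n \geq 1$ once the orbit enters the escaping regime (and the bound is only stronger outside it), which yields $\hlambda_{\phi,v}(\alpha) \geq \tfrac{1}{d}\log^+|c|_v$. At archimedean places I expect an analogous argument: outside a bounded neighborhood of the $v$-adic filled Julia set of $\phi$ the orbit escapes at rate $d$ and produces the required bound in terms of $\log^+|c|_v$; the complementary region is bounded in $|c|_v^{1/(d-1)}$, which keeps the ``lost'' contribution controlled.

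The crucial obstacle is the non-archimedean Type II places, where $v(c) = -d k_v$ and cancellation in $\alpha^d + c$ becomes possible precisely when $v(\alpha) = -k_v$. At such $v$, if $v(\alpha) \neq -k_v$ the Type I argument applies verbatim. Otherwise, letting $m_v$ be the first iterate for which $v(\phi^{m_v}(\alpha)) \neq -k_v$, one obtains $\hlambda_{\phi,v}(\alpha) \geq d^{-m_v-1}\log^+|c|_v$; if no such $m_v$ exists then the orbit is $v$-adically trapped, with $|\phi^n(\alpha)|_v = q_v^{k_v}$ for all $n$, and $\hlambda_{\phi,v}(\alpha) = 0$. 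To control orbits that remain trapped at Type II places, I would exploit two facts in tandem: each trapped iterate contributes $\log^+|\phi^n(\alpha)|_v = k_v\log q_v$ to the naive height $h(\phi^n(\alpha))$, and by Northcott's theorem the wandering iterates $\phi^n(\alpha)$ cannot all lie in a bounded-height subset of $K$. This combinatorial/gap argument, run across the $N$ Type II places, is where the dependence of $C$ on $N$ and on $[K:\QQ]$ enters, and is the central technical difficulty of the proof.

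Assembling these local bounds yields $\h_\phi(\alpha) \geq C\, h(c)$ with $C$ of the required shape. The $\max\{h(c), 1\}$ form of the theorem then follows by a short separate argument handling the case $h(c) \leq 1$: boundedness of $c$ confines the problem to a compact family of maps, from which a uniform positive lower bound on $\h_\phi$ over wandering $K$-points follows by a Northcott-style argument applied to a suitable iterate of $\alpha$.
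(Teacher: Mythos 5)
Your first paragraphs are on the right track: the decomposition into local canonical heights, the clean lower bound $\hlambda_{\phi,v}(\alpha)\geq\frac{1}{d}\log^+|c|_v$ at Type~I places (where the ultrametric forces immediate escape), and the identification of Type~II places as problematic all match the paper's setup. But the overall strategy --- bound each $\hlambda_{\phi,v}(\alpha)$ from below by a fixed fraction of $\log^+|c|_v$ and sum --- cannot work, and the point at which you wave your hands (``keeps the `lost' contribution controlled'' at archimedean places; ``combinatorial/gap argument'' plus Northcott at Type~II places) is exactly where the theorem's real content lives. At an archimedean place $v$, a wandering $\alpha$ can lie in the $v$-adic filled Julia set, forcing $\hlambda_{\phi,v}(\alpha)=0$, yet $\log^+|c|_v$ can be the \emph{entire} support of $h(c)$: if $c$ is an algebraic integer there are no Type~I or Type~II places at all, so a sum of local lower bounds yields only $\h_\phi(\alpha)\geq 0$, which is vacuous, while the theorem demands $\h_\phi(\alpha)\geq C\,h(c)$ with $C$ depending only on $d$ and $[K:\QQ]$. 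The archimedean places are therefore not a secondary nuisance; they are as hard as the Type~II places, and your sketch does not address them.

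The Northcott argument also cannot close the gap. Northcott's theorem is qualitative: it rules out infinitely many bounded-height iterates, hence rules out $\h_\phi(\alpha)=0$ for wandering $\alpha$, but gives no lower bound whatsoever in terms of $h(c)$ --- the orbit might escape at iterate $10^{h(c)}$ for all Northcott tells you. What the paper does instead, and what you need, is an intrinsically \emph{global} argument: if at some place $v$ (archimedean or Type~II) the iterates $\phi^k(\alpha)$ fail to escape $\Bcal_v$, then each such iterate must be $v$-adically close to one of the finitely many roots of $\phi^m$ ($m=1$ or $2$); by pigeonhole, two iterates $\phi^i(\alpha),\phi^j(\alpha)$ with $i<j$ of bounded size are close to the same root, hence $\log|\phi^j(\alpha)-\phi^i(\alpha)|_v$ is very negative, roughly $-\frac{1}{d}\log^+|c|_v$. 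Iterating the pigeonhole over all archimedean and Type~II places (this is where $[K:\QQ]$ and $s$ enter, producing a doubly-exponential bound $N$ on the indices) gives a single pair $i<j$ for which either escape or closeness holds simultaneously at every such place, and in either case the quantity $\frac{1}{d}\lambda_v(c)+\log|\phi^j(\alpha)-\phi^i(\alpha)|_v$ is bounded above by $d^{j+1}\hlambda_{\phi,v}(\alpha)+O(1)$. Summing over \emph{all} $v$ and invoking the product formula $\sum_v n_v\log|\phi^j(\alpha)-\phi^i(\alpha)|_v=0$ eliminates the differences and leaves precisely $\frac{1}{d}h(c)\leq d^{j+1}\h_\phi(\alpha)+O(1)$. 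This ``two iterates cannot be simultaneously close at too many places'' mechanism, imported from Silverman's proof of the analogue of Lang's conjecture for elliptic curves, is the key idea your proposal is missing.
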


In particular, if $c$ is an algebraic integer, then $C$ depends only on $d$ and $[K:\QQ]$.
This is actually a slightly weaker form of the more technical Theorem~\ref{mainresult}.  Note that for any $M$, there are only finitely many $c\in\overline{\QQ}$ such that $h(c)<M$ and $[\QQ(c):\QQ]<M$.  In particular, there are only finitely many values of $c$ for which the estimate below is trivial, and the constants may be adjusted for these values to produce Theorem~1.

\begin{theorem}\label{mainresult}
Let  $c\in K$, let $s$ be the number of Type II places for $\phi(z)=z^d+c$,  let $r\leq [K:\QQ]$ be the number of distinct archimedean valuations on $K$ (i.e., the number of real embeddings plus half the number of complex embeddings), let $$m=\begin{cases} 2 &\text{if }d=2\\ 1&\text{otherwise,}\end{cases}$$ and let 
$$N=\frac{2}{d^m}\left((d^m+1)^{r+s+1}-1\right).$$
Then for any $\alpha\in K$, either
$\phi^i(\alpha)=\phi^j(\alpha)$
for some $i\neq j< N$, or else
$$\h_\phi(\alpha)\geq \frac{1}{d^{N+2}} \left(h(c)-d(d+2m)\log 2\right).$$
\end{theorem}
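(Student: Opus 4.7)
The plan is to decompose the canonical height as $\h_\phi(\alpha)=\sum_{v\in M_K}\hlambda_v(\alpha)$ and to bound this sum below by combining a place-by-place analysis of $z\mapsto z^d+c$ with a pigeonhole argument on the orbit $\alpha,\phi(\alpha),\ldots,\phi^N(\alpha)$.

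I would first analyse each place. At a non-archimedean $v$ with $v(c)\ge 0$ (good reduction), the iteration behaves as $v(\phi(\alpha))=\min\{dv(\alpha),v(c)\}$ once $v(\alpha)<0$, so $\hlambda_v(\alpha)$ is an explicit positive multiple of $\max\{0,-v(\alpha)\}$ and requires no further work. At a Type I bad place the assumption $d\nmid v(c)$ forces $dv(\alpha)\ne v(c)$ automatically, producing irreversible escape within one step and a clean lower bound on $\hlambda_v(\alpha)$ in terms of $|v(c)|$. The real obstruction is the $r+s$ archimedean and Type II places, where orbits can persist near a critical valuation (at Type II places, at $v(\alpha)=-e$ with $v(c)=-de$) or inside a bounded archimedean region, so the defect $\log^+|\alpha|_v-\hlambda_v(\alpha)$ can itself be of size $\log^+|c|_v$ and a pointwise bound is not enough.

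To circumvent this I would exploit $c=\phi(\alpha)-\alpha^d$, which via sub-additivity gives $h(c)\le h(\phi(\alpha))+d\,h(\alpha)+\log 2$, so for any $\alpha$ at least one of $h(\alpha),h(\phi(\alpha))$ is already a definite fraction of $h(c)$. The worry is only that iterating this observation might force us to go too far along the orbit. The bound $N$ is calibrated by a pigeonhole in which, at each problematic place, one assigns $\phi^n(\alpha)$ a ``state'' drawn from a finite set of size $d^m+1$---stratified, at a Type II place, by where $v(\phi^n(\alpha))$ lies relative to $v(c)/d$, and archimedeanly by $|\phi^n(\alpha)|_v$ relative to $|c|_v^{1/d}$---together with one extra global state for bookkeeping, giving $(d^m+1)^{r+s+1}$ joint states. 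Whenever two iterates $\phi^{n_1}(\alpha)$ and $\phi^{n_2}(\alpha)$ with $n_1<n_2\le N$ share a joint state, either $\phi^{n_1}(\alpha)=\phi^{n_2}(\alpha)$, giving the pre-periodicity alternative, or the shared state forces some intermediate $\phi^{n_0}(\alpha)$ to satisfy $h(\phi^{n_0}(\alpha))\ge\tfrac{1}{d}h(c)-O(\log 2)$. The factor $2/d^m$ in $N$ provides the buffer needed to run the coincidence argument without exhausting the pigeonhole, and the exponent $m=2$ for $d=2$ reflects the extra states needed to resolve the near-critical behaviour of $z^2+c$.

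Converting this naive-height bound to a canonical-height bound via $\h_\phi\ge h-O(1)$, with the $O(1)$ tracked explicitly as $\log 2$ times a polynomial in $d$ and $m$, and then applying $\h_\phi(\phi^{n_0}(\alpha))=d^{n_0}\h_\phi(\alpha)$ with $n_0\le N$, yields the stated inequality $\h_\phi(\alpha)\ge d^{-(N+2)}(h(c)-d(d+2m)\log 2)$. The main obstacle will be the combinatorial design of the state function at archimedean and Type II places: the states must be few enough to fix the pigeonhole count at $d^m+1$, yet rich enough that a state coincidence without equality of iterates genuinely produces the desired height contribution. Tracking the $\log 2$ and $d^m$ losses through $N$ iterations so that the constants come out to precisely $d(d+2m)\log 2$ and $d^{N+2}$ will be the most delicate bookkeeping step.
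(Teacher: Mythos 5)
Your proposal gets the broad outline right—good and Type I places are easy, archimedean and Type II places are the obstruction, a pigeonhole over the orbit with state count on the order of $(d^m+1)^{r+s}$ is the engine, and $N$ is calibrated accordingly—but the mechanism you describe for turning the pigeonhole into a height lower bound has a genuine gap that cannot be patched as written.

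The fatal step is the conversion at the end. You propose to deduce $h(\phi^{n_0}(\alpha)) \geq \tfrac{1}{d}h(c) - O(\log 2)$ for some $n_0 \leq N$ and then pass to the canonical height via $\h_\phi \geq h - O(1)$, with the $O(1)$ depending only on $d$ and $m$. That inequality is false with a $c$-independent constant: the discrepancy $h - \h_\phi$ is genuinely of order $h(c)$. For example $\alpha = 0$ has $h(0) = 0$ but $\h_\phi(0) = \tfrac{1}{d}\h_\phi(c)$, which is $\tfrac{1}{d}h(c) + O(1)$; conversely, a point $\alpha$ in the $v$-adic filled Julia set at an archimedean place with $|c|_v$ large has $\hlambda_{\phi,v}(\alpha) = 0$ while $\lambda_v(\alpha)$ can be as large as about $\tfrac{1}{d}\lambda_v(c)$. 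Even in the integral case $c \in \ZZ$, Lemma~\ref{lem:computations} only gives $|\h_\phi - h| \leq \tfrac{\log|2c|}{d-1}$. So the error in your conversion is of the same size as the quantity you are trying to bound, and the argument is circular. Relatedly, the bound $h(c) \leq h(\phi(\alpha)) + d\,h(\alpha) + \log 2$ from $c = \phi(\alpha) - \alpha^d$ produces only naive-height information and plays no role in a proof that must control $\h_\phi$ directly.

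The paper never passes through the naive height. The local lemmas are stated for the local \emph{canonical} heights $\hlambda_{\phi, v}$: at every place $v$, for suitable indices $j > i$, one proves
\begin{equation*}
\frac{1}{d}\lambda_v(c) + \log|\phi^j(\alpha) - \phi^i(\alpha)|_v \leq d^{j+1}\hlambda_{\phi, v}(\alpha) + \delta_v,
\end{equation*}
with $\delta_v$ an explicit quantity depending only on $d$ and $m$ that is zero at good places and satisfies $\sum_v n_v \delta_v = [K:\QQ](d+2m)\log 2$. At good and Type I places this holds for all $j > i$ once one replaces $\alpha$ by $\phi(\alpha)$ (which forces $\phi(\alpha) \in \Bcal_v$ at Type I places), while at the $r+s$ archimedean and Type II places the pigeonhole on ``close to the same root of $\phi^m$, or else in $\Bcal_v$'' shrinks the index set by a factor $\tfrac{1}{d^m+1}$ each time, keeping at least two indices $j > i$ at the end. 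One then applies the product formula, $\sum_v n_v \log|\phi^j(\alpha) - \phi^i(\alpha)|_v = 0$, to the left-hand sides and sums the right-hand sides, obtaining $\tfrac{1}{d}h(c) \leq d^{j+1}\h_\phi(\alpha) + (d+2m)\log 2$ directly. This product-rule ``gap principle''—two algebraic numbers that are close at all of the problematic places must each carry substantial canonical height—is precisely the idea your sketch is missing; without it the pigeonhole yields a naive-height bound that you cannot convert.
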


Theorem~\ref{mainresult} also yields a bound on the period of pre-periodic points of $\phi$ which depends only on $d$, $[K:\QQ]$, and the number places at which $\phi$ has Type II reduction.  Benedetto~\cite{benedetto} has proven a general result for pre-periodic points of polynomials which provides a bound on the number of pre-periodic points for $\phi$ which is much stronger, both in terms of the dependence on $[K:\QQ]$, and the dependence on the number of bad primes.  Benedetto's bound depends on the number of primes of bad reduction, and so one might suspect that the above bound (depending only the number of primes of Type II reduction) could be stronger in some cases. But we will see below that $\phi$ can have no pre-periodic points whatsoever if there is a prime of Type I reduction, and so Benedetto's bound is stronger in every non-trivial case.

The proof of Theorem~\ref{mainresult} is largely motivated by Silverman's result for elliptic curves \cite{js:lang}.  That result is proven by expressing the canonical height as a sum of local heights, and bounding each from below.  It turns out that the heights corresponding to primes of good, additive, or non-split multiplicative reduction are relatively easy to estimate if $P\in 12E(K)$, and so starting with that condition one need only worry about archimedean valuations, and those corresponding to primes of split multiplicative reduction for $E$.   While the heights at these valuations are more complicated, Silverman employs a pigeon-hole argument to show that any point $P\in E(K)$ must have some multiple $nP$, with $n$ not too large, whose height at each of these places may be estimated in the fashion required.  Thus a lower bound is obtained on the height of some multiple of $P$, and the bound on the height of $P$ is recovered by the transformation law of the canonical height: $\h(nP)=n^2\h(P)$.

The proof of Theorem~\ref{mainresult} proceeds along similar lines, but there is an added difficulty: without the underlying group structure, it is hard to produce any lower bound on the local heights at the badly behaved primes (in this context, archimedean primes and those at which $\phi$ has reduction of Type II).  The solution to this added complication is motivated by a simple `gap principle':  that two rational numbers of small height cannot be too close together in the usual metric (unless they are equal).  Indeed, if $a_1b_2\neq b_1a_2$, then the product rule for valuations (or a more elementary argument) tells us that
$$-\log\left|\frac{a_1}{b_1}-\frac{a_2}{b_2}\right|=\sum_{v<\infty}\log\left|\frac{a_1}{b_1}-\frac{a_2}{b_2}\right|_v\leq\log|b_1|+\log|b_2|,$$
and so an upper bound on the archimedean distance between the two points gives a lower bound on the height of one.  Prompted by this observation, we shall show that if none of the iterates $\phi^j(z)$, with $j$ reasonably small, have large height at a given valuation, then several must be extremely close together in the relevant metric.  Using a pigeonhole argument and the product rule, we obtain the bound on the global canonical height.

In the final section, we will turn to more computational matters.  Theorem~\ref{mainresult} indicates that there is an absolute constant $A>0$ such that if $\phi(z)=z^2+c$, with $c\in\ZZ$, then 
$$\inf_{\alpha\in\QQ}\h_\phi(\alpha)\geq A\max\{\log|c|, 1\},$$
where the infimum is taken over wandering points.  The value in the theorem, however, is far from optimal, and is in fact trivial for small values of $|c|$.  We remedy this by computing the minimum value of $\h_\phi(\alpha)$ in the remaining cases, and provide a small amount of computational evidence towards the `true' value of the constant $A$.  In particular, we show that
$$\h_\phi(\alpha)\geq \frac{1}{32}\max\{\log|c|, 1\},$$
if $\alpha$ is not pre-periodic for $\phi$, and explicitly describe the possible pre-periodic points.
Further, we construct a family of examples to demonstrate that
$$\frac{1}{32}\leq \liminf_{c\rightarrow-\infty}\left(\inf_{\alpha\in \QQ}\frac{\h_\phi(\alpha)}{\log|c|}\right)\leq \frac{1}{8}$$
(where again we omit pre-periodic points).
Computation indicates that the upper bound better reflects the truth than does the lower bound.  Note that the problem is essentially trivial for $c>0$, and it turns out that
$$\lim_{c\rightarrow\infty}\left(\inf_{\alpha\in\QQ}\frac{\h_\phi(\alpha)}{\log|c|}\right)=\frac{1}{2}$$


\begin{ack}
The author would like to thank Joseph H.~Silverman, Matthew Baker, Michael Zieve, and an anonymous referee for several useful comments on, and corrections to, earlier drafts of this paper.
\end{ack}

\section{Valuations and heights}

Let $K$ be a number field, and let $M_K$ be the usual set of valuations on $K$, with $M_K^0$ denoting the set of nonarchimedean valuations, and $M_K^\infty$ the set of archimedean ones.  To each valuation $v\in M_K$ we attach a naive local height
$$\lambda_v(\alpha)=\max\{\log|\alpha|_v, 0\},$$
and a canonical height for $\phi(z)=z^d+c$ (for $d\geq 2$),
$$\hlambda_{\phi, v}(\alpha)=\lim_{k\rightarrow\infty}\frac{\lambda_v(\phi^k(\alpha))}{d^k}.$$
As noted in \cite{js:ads}, this limit is not guaranteed to exist for an arbitrary morphism $\phi:\PP^1\rightarrow\PP^1$, but is if $\phi$ is a polynomial.
If $n_v=[K_v:\QQ_v]$ is the local degree of $K$, then the usual absolute height of $\alpha\in K$ is given by
\begin{alignat*}{1}
h(\alpha)&=\frac{1}{[K:\QQ]}\sum_{v\in M_K}n_v\lambda_v(\alpha)\\
\intertext{and the canonical height relative to $\phi$ is}
\h_\phi(\alpha)&=\frac{1}{[K:\QQ]}\sum_{v\in M_{K}}n_v\hlambda_{\phi, v}(\alpha).
\end{alignat*} 
 We will also use the notation $\Ocal_\phi^+(\alpha)$ for the forward orbit of $\alpha$ under $\phi$, i.e., the set $\{\alpha, \phi(\alpha), \phi^2(\alpha), \ldots\}$.
 
As mentioned in the introduction, we will distinguish between two types of primes of bad reduction.
\begin{definition}
Suppose that $v:K^*\twoheadrightarrow\ZZ$ is a normalized (as in the introduction) nonarchimedean valuation.  Then we will say that $\phi$ has \emph{Type I} reduction at $v$ if $v(c)<0$ and $v(c)\not\in d\ZZ$.  We will say that $\phi$ has \emph{Type II} reduction at $v$ if $v(c)<0$ and $v(c)\in d\ZZ$.  As usual, $\phi$ has good reduction at $v$ if $v(c)\geq 0$.
\end{definition}

It should be noted that, while we are using the definition of ``good reduction'' used by Silverman \cite[p.~58]{js:ads}, there is an alternate definition suggested by Szpiro and Tucker \cite[Definition~1.1]{st2}. 
Let $U$ be the support of the ramification divisor of a rational function $\phi:\PP^1\rightarrow\PP^1$.  Then $\phi$ is said to have \emph{critically good reduction} at $v$ if  for any distinct points $P$ and $Q\in U$, or any distinct points $P$ and $Q\in \phi(U)$, the images of $P$ and $Q$  modulo the maximal ideal in the ring of $v$-integers of $K$ are distinct.  As the ramification divisor of $\phi(z)=z^d+c$ is supported on $U=\{0, \infty\}$, and as $\phi(U)=\{c, \infty\}$, we can see that the primes at which this particular $\phi$ has critically bad reduction are precisely the primes modulo which $c$ and $\infty$ coincide.  Thus, for our functions, the definitions are equivalent.

  In  the argument below, primes of Type I reduction (and those of good reduction) are those for which local heights may be easily estimated, in the same sense that local heights on elliptic curves at primes of additive or non-split multiplicative reduction (and  at those of good reduction) may be easily estimated.  The primes of Type II will be the more problematic primes, playing a role similar to that played by primes of split multiplicative reduction in \cite{js:lang}.
The analogy extends slightly: the property of being a Type II prime (like that of being a prime of split multiplicative reduction) is stable under field extension, while the property of being a Type I prime is not (for example, $3$ is a Type I prime for $\phi(z)=z^2+\frac{1}{3}$ over $\QQ$, but a Type II prime for the same morphism over $\QQ(\sqrt{3})$).  Of course, the distinction between the different types of bad reduction for elliptic curves is one of deep arithmetic significance; it is unclear whether the distinction introduced here is a special case of an arithmetically natural classification types of bad reduction for morphisms, or merely an \emph{ad hoc} division which is convenient in this particular case.



\section{The archimedean places}\label{arch}

For the present section we fix a valuation $v\in M_K^\infty$.  We will also fix an embedding $\overline{K}\rightarrow \CC$ corresponding to this absolute value, allowing us to speak unambiguously about $|\cdot|_v$-values of elements of $\overline{K}$.

It is convenient to think of iterates escaping a set of small local height, and so we will define a set of points with reasonably large local height.
For each archimedean place $v\in M_K^\infty$ we will set
$$\Bcal_v=\left\{\alpha\in K:\lambda_v(\alpha)> \frac{1}{d}\lambda_v(c)+\log 2\right\}.$$
Note that $\Bcal_v$ is, in the notation of \cite[Chapters 2 and 3]{js:ads} a subset of $\Bcal_\phi(\infty)$, the $v$-adic attracting basin of the fixed point $\infty$, and so its complement contains the $v$-adic filled Julia set of $\phi$.

\begin{lemma}\label{lem:arch:bcalv}
Let $\alpha\in\Bcal_v$.  Then $\Ocal_\phi^+(\alpha)\subseteq\Bcal_v$, and
\begin{equation}\label{arch:heightdiff}\frac{\log(1-2^{-d})}{d-1} \leq \hlambda_{\phi, v}(\alpha)-\lambda_v(\alpha)\leq \frac{\log(1+2^{-d})}{d-1}.\end{equation}
For all $j>i\geq 0$ with $\phi^j(\alpha)\neq\phi^i(\alpha)$,
\begin{equation}\label{arch:phidiff}\frac{1}{d}\lambda_v(c)+\log|\phi^j(\alpha)-\phi^i(\alpha)|_v\leq d^{j+1}\hlambda_{\phi, v}(\alpha)+2\log 2.\end{equation}
\end{lemma}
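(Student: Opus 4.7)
The plan unfolds in three stages, one per conclusion, each based on unpacking the defining inequality of $\Bcal_v$ as the concrete estimate $|\alpha|_v > 2\max\{|c|_v^{1/d},1\}$; from this, $|c|_v/|\alpha|_v^d < 2^{-d}$, and the triangle inequality applied to $\phi(\alpha) = \alpha^d + c$ gives
$$(1-2^{-d})|\alpha|_v^d \;\leq\; |\phi(\alpha)|_v \;\leq\; (1+2^{-d})|\alpha|_v^d. \qquad (\ast)$$
For the first claim, I would take logarithms and check that $(\ast)$ together with the $\Bcal_v$-inequality for $\alpha$ implies the $\Bcal_v$-inequality for $\phi(\alpha)$; the essential cancellation is $(d-1)\log 2 + \log(1-2^{-d}) = \log((2^d-1)/2) \geq 0$ for $d\geq 2$. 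Induction then yields $\Ocal_\phi^+(\alpha) \subseteq \Bcal_v$.

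For \eqref{arch:heightdiff}, applying $(\ast)$ with $\phi^k(\alpha)$ in place of $\alpha$ (legitimate by forward invariance) rewrites to
$$\log(1-2^{-d}) \;\leq\; \lambda_v(\phi^{k+1}(\alpha)) - d\,\lambda_v(\phi^k(\alpha)) \;\leq\; \log(1+2^{-d})$$
for every $k \geq 0$. Telescoping
$$\hat\lambda_{\phi,v}(\alpha) - \lambda_v(\alpha) \;=\; \sum_{k=0}^{\infty} \frac{\lambda_v(\phi^{k+1}(\alpha)) - d\,\lambda_v(\phi^k(\alpha))}{d^{k+1}}$$
and summing the resulting geometric series of ratio $1/d$ yields the two-sided bound.

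For \eqref{arch:phidiff}, I would first observe that on $\Bcal_v$ the orbit is monotone in $|\cdot|_v$: the inequality $(1-2^{-d})|\phi^k(\alpha)|_v^d \geq |\phi^k(\alpha)|_v$ reduces to $|\phi^k(\alpha)|_v^{d-1} \geq 2^d/(2^d-1)$, which follows from $|\phi^k(\alpha)|_v > 2$. Hence for $j > i$ one gets $|\phi^j(\alpha)-\phi^i(\alpha)|_v \leq 2|\phi^j(\alpha)|_v$, while $\max\{|c|_v^{1/d},1\} \leq \tfrac{1}{2}|\phi^j(\alpha)|_v$ is immediate from $\phi^j(\alpha) \in \Bcal_v$. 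Multiplying these gives
$$\tfrac{1}{d}\lambda_v(c) + \log|\phi^j(\alpha)-\phi^i(\alpha)|_v \;\leq\; 2\lambda_v(\phi^j(\alpha)),$$
and applying \eqref{arch:heightdiff} to $\phi^j(\alpha)$ (whose canonical height is $d^j\hat\lambda_{\phi,v}(\alpha)$) bounds the right side by $2d^j\hat\lambda_{\phi,v}(\alpha) + \tfrac{2}{d-1}\log(2^d/(2^d-1))$. The main obstacle, and the only place calibration matters, is reconciling this with the stated $d^{j+1}\hat\lambda_{\phi,v}(\alpha) + 2\log 2$: one exploits $\hat\lambda_{\phi,v}(\alpha) \geq 0$ together with $2d^j \leq d^{j+1}$ for $d\geq 2$ to inflate the coefficient, and verifies $\log(2^d/(2^d-1)) \leq (d-1)\log 2$ by rewriting it as $2^d \leq 2^{d-1}(2^d-1)$, which holds for all $d \geq 2$.
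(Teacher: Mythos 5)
Your argument is correct and follows essentially the same route as the paper's: unpack the definition of $\Bcal_v$ to get the two-sided bound $(1-2^{-d})|\alpha|_v^d \leq |\phi(\alpha)|_v \leq (1+2^{-d})|\alpha|_v^d$, use it for forward invariance and the telescoping estimate \eqref{arch:heightdiff}, then bound $\log|\phi^j(\alpha)-\phi^i(\alpha)|_v$ by a single local term plus $\log 2$ and feed in \eqref{arch:heightdiff} and the $\Bcal_v$ inequality. The only cosmetic difference is in part three: you establish monotonicity of $|\phi^k(\alpha)|_v$ and bound $\frac{1}{d}\lambda_v(c)$ via $\phi^j(\alpha)\in\Bcal_v$ directly, whereas the paper uses $\max\{|\phi^j(\alpha)|_v,|\phi^i(\alpha)|_v\}$ and derives $\frac{1}{d}\lambda_v(c)<\hlambda_{\phi,v}(\alpha)$ from $\alpha\in\Bcal_v$ together with \eqref{arch:heightdiff}; both close the gap with $1+d^j\leq d^{j+1}$ (respectively $2d^j\leq d^{j+1}$) and the same numerical inequality $2^d\leq 2^{d-1}(2^d-1)$.
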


\begin{proof}
Under the assumption that $\alpha\in\Bcal_v$, we have
$$|\phi(\alpha)|_v\geq |\alpha|_v^d-|c|_v> \left(1-\frac{1}{2^d}\right)|\alpha|_v^d>\left(2^d-1\right)\exp(\lambda_v(c)).$$
It follows that
$$\lambda_v(\phi(\alpha))> \lambda_v(c)+\log 3>\frac{1}{d}\lambda_v(c)+\log 2,$$
and so $\phi(\alpha)\in\Bcal_v$.
By induction, $\Ocal_\phi^+(\alpha)\subseteq\Bcal_v$.

We have, for $\alpha\in\Bcal_v$,
$$\left(1-2^{-d}\right)|\alpha|^d_v\leq |\phi(\alpha)|_v\leq |\alpha|_v^d+|c|_v\leq \left(1+2^{-d}\right)|\alpha|^d_v.$$
Taking logarithms and evaluating a telescoping sum, we have
$$\frac{\log\left(1-2^{-d}\right)}{d-1}\leq \hlambda_{\phi, v}(\alpha)-\lambda_v(\alpha)\leq \frac{\log\left(1+2^{-d}\right)}{d-1}.$$

For the final inequality, we have, if $j>i\geq0$ and $\phi^j(\alpha)\neq\phi^i(\alpha)$,
\begin{eqnarray*}
\log|\phi^j(\alpha)-\phi^i(\alpha)|_v&\leq&\log\max\{|\phi^j(\alpha)|_v, |\phi^i(\alpha)|_v\}+\log 2\\
&\leq &\max\{\hlambda_{\phi, v}(\phi^j(\alpha)), \hlambda_{\phi, v}(\phi^i(\alpha))\}+\log 2-\frac{\log\left(1-2^{-d}\right)}{d-1}\\
&\leq & d^j\hlambda_{\phi, v}(\alpha)+2\log 2.
\end{eqnarray*}
By \eqref{arch:heightdiff}, we have
$$\hlambda_{\phi, v}(\alpha)\geq\lambda_v(\alpha)+\frac{\log\left(1-2^{-d}\right)}{d-1} \geq \lambda_v(\alpha)-\log 2>\frac{1}{d}\lambda_v(c),$$
and so we have
$$\frac{1}{d}\lambda_v(c)+\log|\phi^j(\alpha)-\phi^i(\alpha)|_v\leq d^{j+1}\hlambda_{\phi, v}(\alpha)+2\log 2.$$
\end{proof}

\begin{lemma}\label{lem:arch:closetobeta} Let $m=2$ if $d=2$ and $m=1$ otherwise.
If $\phi^m(\alpha)\not\in \Bcal_v$, then for some root $\beta$ of $\phi^m$,
$$\log|\alpha-\beta|_v\leq -\frac{1}{d}\lambda_v(c)+(d+2m-2)\log 2-\log d.$$
\end{lemma}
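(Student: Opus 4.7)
The plan is to use the fact that $\phi^m$ is a monic polynomial of degree $d^m$, so that $\phi^m(z) = \prod_\beta (z-\beta)$ over its roots, together with an elementary ``closest root'' inequality that holds for archimedean $v$: if $\beta$ is the root of $\phi^m$ closest to $\alpha$ and $\beta'$ is any other root, then
$$|\alpha - \beta'|_v \geq \tfrac{1}{2}|\beta' - \beta|_v,$$
which follows by combining $|\alpha - \beta'|_v \geq |\alpha - \beta|_v$ (closest-root assumption) with the triangle inequality $|\alpha - \beta'|_v \geq |\beta' - \beta|_v - |\alpha - \beta|_v$ and averaging. This lets us lower-bound every factor in $\phi^m(\alpha) = \prod_\beta(\alpha-\beta)$ except one, thereby converting the hypothesis $|\phi^m(\alpha)|_v \leq 2|c|_v^{1/d}$ (which is what $\phi^m(\alpha) \notin \Bcal_v$ provides when $|c|_v \geq 1$) into an upper bound on $|\alpha - \beta|_v$.

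For $d \geq 3$ and $m = 1$: the roots of $\phi$ are $\zeta^k\beta$, $k = 0, \ldots, d-1$, where $\beta^d = -c$ and $\zeta = e^{2\pi i/d}$, so $|\beta|_v = |c|_v^{1/d}$. Taking $\beta$ closest to $\alpha$ and using the identity $\prod_{k=1}^{d-1}(1-\zeta^k) = d$ (hence $\prod_{k=1}^{d-1}|1-\zeta^k|_v = d$), the closest-root inequality yields
$$\prod_{k=1}^{d-1}|\alpha - \zeta^k\beta|_v \geq \frac{d\,|c|_v^{(d-1)/d}}{2^{d-1}}.$$
Dividing $|\phi(\alpha)|_v \leq 2|c|_v^{1/d}$ by this factor gives $|\alpha-\beta|_v \leq 2^d/(d\,|c|_v^{(d-2)/d})$, whose logarithm is $\leq -\tfrac{1}{d}\lambda_v(c) + d\log 2 - \log d$ once $d \geq 3$.

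For $d = 2$ and $m = 2$: apply the previous argument once more, to $\phi(\alpha)$ in place of $\alpha$, using $\phi^2(\alpha) \notin \Bcal_v$. This yields $|\phi(\alpha) - b_1|_v \leq 2$ for some square root $b_1$ of $-c$ (the dependence on $|c|_v$ evaporates because $d-2 = 0$, but the fixed constant $2$ is enough for what follows). Now let $\beta$ be the square root of $b_1 - c$ closest to $\alpha$, so $\phi^2(\beta) = 0$; the identity $\alpha^2 - \beta^2 = \phi(\alpha) - b_1$ gives $|\alpha - \beta|_v = |\phi(\alpha) - b_1|_v / |\alpha + \beta|_v$. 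The closest-root trick applied to the pair $\{\beta, -\beta\}$ yields $|\alpha + \beta|_v \geq |\beta|_v$, while $|\beta|_v^2 = |b_1 - c|_v \geq |c|_v - |c|_v^{1/2} \geq |c|_v/2$ for $|c|_v \geq 4$. Combining produces $|\alpha - \beta|_v \leq 2\sqrt{2}\,|c|_v^{-1/2}$, which is dominated by the claimed $-\tfrac{1}{2}\lambda_v(c) + 3\log 2$.

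The main obstacle is bookkeeping the finitely many ``small $|c|_v$'' cases: the step bounding $|\beta|_v$ below requires $|c|_v$ to exceed a fixed threshold, and one likewise needs $|c|_v \geq 1$ to identify $\lambda_v(c)$ with $\log|c|_v$. In the residual range, however, the right-hand side of the lemma degenerates to a positive constant, while $|\alpha|_v$ and $|\beta|_v$ are themselves controlled by the hypothesis on $\phi^m(\alpha)$ propagated backward through $\phi^{j+1}(\alpha) = \phi^j(\alpha)^d + c$; so the inequality holds with room to spare, possibly after absorbing a little extra slack into the constant $(d+2m-2)\log 2$.
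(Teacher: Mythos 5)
Your proposal follows essentially the same line of argument as the paper's proof: take $\beta$ to be the root of $\phi^m$ nearest $\alpha$, use the closest-root inequality $|\alpha-\beta'|_v\geq\tfrac{1}{2}|\beta-\beta'|_v$ to lower-bound the remaining factors of $\phi^m(\alpha)=\prod(\alpha-\beta')$ by $2^{1-d}|\phi'(\beta)|_v=2^{1-d}d|c|_v^{(d-1)/d}$ (your cyclotomic product $\prod_{k=1}^{d-1}|1-\zeta^k|=d$ is the same computation), and convert $|\phi^m(\alpha)|_v\leq 2|c|_v^{1/d}$ into the claimed bound; for $d=2$ you bootstrap through $\phi(\alpha)$ exactly as the paper does.

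The one place you fall short of a complete proof is the bounded-$|c|_v$ regime, which you defer to ``bookkeeping'' and hedge by suggesting the constant $(d+2m-2)\log 2-\log d$ might need enlarging. It does not, and since the lemma asserts that specific constant, this is the step you would actually have to carry out. For $d\geq 3$ and $|c|_v<1$, the hypothesis $\phi(\alpha)\notin\Bcal_v$ gives $|\alpha|_v^d\leq|\phi(\alpha)|_v+|c|_v\leq 3$, and then $|\alpha-\beta|_v\leq|\alpha|_v+|\beta|_v\leq 3^{1/d}+|c|_v^{1/d}<8/3\leq 2^d/d$, which is exactly the right-hand side when $\lambda_v(c)=0$. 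For $d=2$, your threshold $|c|_v\geq 4$ is unnecessarily conservative: your own estimate $|b_1-c|_v\geq|c|_v-|c|_v^{1/2}$ already yields $|\beta|_v\geq\tfrac{1}{2}|c|_v^{1/2}$ once $|c|_v>2$, and for $|c|_v\leq 2$ the crude bound $|\alpha-\beta|_v\leq|\alpha|_v+|\beta|_v<2\sqrt 2+2<4\sqrt 2\leq 8\max\{|c|_v,1\}^{-1/2}$ (using $\alpha\notin\Bcal_v$, which follows from $\phi^2(\alpha)\notin\Bcal_v$ via Lemma~\ref{lem:arch:bcalv}) closes the remaining range. So the gap is genuine as written but entirely fillable by the same elementary estimates you already have in hand, with no loss in the constant.
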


\begin{proof}
Let $\beta$ be the root of $\phi(z)$ nearest $\alpha$, so that for all roots $\beta'\neq\beta$,
$$|\beta-\beta'|_v\leq |\alpha-\beta|_v+|\alpha-\beta'|_v\leq 2|\alpha-\beta'|_v.$$
Then we have
$$|\phi(\alpha)|_v\geq |\alpha-\beta|_v\prod_{\beta'\neq\beta}\frac{1}{2}|\beta-\beta'|_v=|\alpha-\beta|_v|2^{-d+1}d\beta^{d-1}|_v,$$
where the product is over roots of $\phi(z)$.
As $\phi(\alpha)\not\in\Bcal_v$, and as $|\beta|_v=|c|_v^{1/d}$, it follows that
$$|\alpha-\beta|_v\leq |2^{d-1}d^{-1}|_v|\beta|_v^{1-d}|\phi(\alpha)|_v\leq|2^{d}d^{-1}|_v|c|_v^{-(d-2)/d}.$$
The result follows if $d\geq 3$ and $|c|_v\geq 1$.  If $d\geq 3$ and $|c|_v<1$, then $\phi(\alpha)\not\in\Bcal_v$ implies
$$|\alpha|^d_v\leq |\alpha^d+c|_v+|c|_v\leq |\phi(\alpha)|_v+1\leq 3,$$
and so
$$|\alpha-\beta|_v\leq |\alpha|_v+|\beta|_v\leq 3^{1/d}+|c|_v^{1/d}<\frac{8}{3}\leq 2^{d}d^{-1}\max\{|c|_v , 1\}.$$

If $d=2$ then, replacing $\alpha$ by $\phi(\alpha)$ and $\beta$ by $\gamma$, we have shown that
$$|\phi(\alpha)-\gamma|_v\leq |2|_v$$
for some root $\gamma$ of $\phi(z)$.
Proceeding in the same fashion as before, let $\phi(\alpha)-\gamma=(\alpha-\beta)(\alpha+\beta)$, and assume without loss of generality that $|\alpha-\beta|_v\leq|\alpha+\beta|_v$.  Then
$$|2\beta|_v\leq |\alpha-\beta|_v+|\alpha+\beta|_v\leq 2|\alpha+\beta|_v,$$
and so
$$|2|_v\geq |\phi(\alpha)-\gamma|_v=|\alpha-\beta|_v|\alpha+\beta|_v\geq |\alpha-\beta|_v|\beta|_v.$$
Note that, up to the choice of branch for the square root function, we have
$$\beta=\pm\sqrt{-c+\sqrt{-c}}.$$
If $|c|_v\leq 2$, then $|\beta|_v< 2$, and so (recalling that $\alpha\not\in\Bcal_v$)
$$|\alpha-\beta|_v< 2\sqrt{2}+2< 4\sqrt{2}\leq 8|c|_v^{-1/2}.$$

If, on the other hand, $|c|_v>2$, we have
$$\beta=\pm\sqrt{-c}\left(1+\frac{w}{2}-\frac{w^2}{8}+\cdots\right)$$
where $w^2=-1/c$.  In particular $|w|_v\leq 2^{-1/2}$, and so
$$|\beta|_v\geq\frac{1}{2} |c|_v^{1/2}$$
(obtained by finding the minimum value of the holomorphic function defined by the power series above).
Thus
$$|\alpha-\beta|_v\leq 4|c|_v^{-1/2}.$$

\end{proof}

\begin{lemma}\label{lem:arch:pigeon}
Let $\alpha\in K$, set $m=2$ if $d=2$ and $m=1$ otherwise,  and
suppose that $X$ is a finite set of non-negative integers.  Then there is a subset $Y\subseteq X$ containing at least $\frac{1}{d^m+1}(\#X-m)$ values such that for all $j,i\in Y$ with $j>i$ and $\phi^j(\alpha)\neq\phi^i(\alpha)$, we have
$$\frac{1}{d}\lambda_v(c)+\log|\phi^j(\alpha)-\phi^i(\alpha)|_v\leq d^{j+1}\hlambda_{\phi, v}(\alpha)+(d+2m-1)\log 2-\log d.$$
\end{lemma}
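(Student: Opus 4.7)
The plan is to organize the indices $n\in X$ according to how $\phi^n(\alpha)$ sits relative to $\Bcal_v$, and then apply pigeonhole.  The key starting observation is that Lemma~\ref{lem:arch:bcalv} tells us $\Bcal_v$ is forward-invariant, so if I set $n_0=\min\{n\geq 0:\phi^n(\alpha)\in\Bcal_v\}$ (with the convention $n_0=\infty$ if no such $n$ exists), then $\phi^n(\alpha)\in\Bcal_v$ precisely when $n\geq n_0$.  My first step would be to discard from $X$ those indices $n$ satisfying $n_0-m\leq n<n_0$, of which there are at most $m$; these are exactly the indices for which neither Lemma~\ref{lem:arch:bcalv} nor Lemma~\ref{lem:arch:closetobeta} can be invoked directly (the $m$-th iterate is in $\Bcal_v$ but $\phi^n(\alpha)$ itself is not), and this discard is the source of the ``$-m$'' term in the target bound.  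Write $X'\subseteq X$ for what remains, so $\#X'\geq\#X-m$.

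Next I would partition $X'$ into at most $d^m+1$ classes: a single class $X_A$ containing those $n\geq n_0$, together with, for each root $\beta$ of $\phi^m$ (at most $d^m$ in number), a class $X_{B,\beta}$ containing those $n<n_0-m$ for which $\beta$ is the root of $\phi^m$ nearest to $\phi^n(\alpha)$.  Each $X_{B,\beta}$ is well-defined because Lemma~\ref{lem:arch:closetobeta} applies to $\phi^n(\alpha)$: its $m$-th iterate $\phi^{n+m}(\alpha)$ lies outside $\Bcal_v$ by the choice $n<n_0-m$.  Pigeonhole then supplies a single class $Y$ with
$$\#Y\geq\frac{\#X'}{d^m+1}\geq\frac{\#X-m}{d^m+1}.$$

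The remaining work is to verify the displayed inequality for pairs $i<j$ in $Y$ with $\phi^j(\alpha)\neq\phi^i(\alpha)$.  If $Y=X_A$, then $\phi^{n_0}(\alpha)\in\Bcal_v$ and I would apply~\eqref{arch:phidiff} from Lemma~\ref{lem:arch:bcalv} to the point $\phi^{n_0}(\alpha)$ with shifted indices $j-n_0,\,i-n_0$, using $\hlambda_{\phi,v}(\phi^{n_0}(\alpha))=d^{n_0}\hlambda_{\phi,v}(\alpha)$; this yields the target inequality with the smaller constant $2\log 2$, which is dominated by $(d+2m-1)\log 2-\log d$ once one checks $2^{d+2m-3}\geq d$ in the two relevant cases $d=2,m=2$ and $d\geq 3,m=1$.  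If instead $Y=X_{B,\beta}$, the triangle inequality $|\phi^j(\alpha)-\phi^i(\alpha)|_v\leq|\phi^j(\alpha)-\beta|_v+|\phi^i(\alpha)-\beta|_v$ combined with the bound from Lemma~\ref{lem:arch:closetobeta} on each summand yields directly
$$\frac{1}{d}\lambda_v(c)+\log|\phi^j(\alpha)-\phi^i(\alpha)|_v\leq(d+2m-1)\log 2-\log d,$$
and one inserts the non-negative quantity $d^{j+1}\hlambda_{\phi,v}(\alpha)$ on the right to match the stated form; non-negativity of $\hlambda_{\phi,v}$ at an archimedean place is automatic, since $\lambda_v\geq 0$ pointwise and the canonical height is its Tate limit.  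I expect the main obstacle to be not any single estimate but the bookkeeping around the boundary layer: verifying that sacrificing precisely the $m$ indices $n_0-m\leq n<n_0$ is both necessary and sufficient for the remaining indices to split cleanly into the $d^m+1$ classes described.
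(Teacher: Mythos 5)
Your argument is correct and follows the same strategy as the paper: forward-invariance of $\Bcal_v$ to isolate a boundary layer of at most $m$ indices, pigeonhole over the $d^m+1$ classes (the $\Bcal_v$-class plus one class per root of $\phi^m$), and then Lemma~\ref{lem:arch:bcalv} or Lemma~\ref{lem:arch:closetobeta} plus the triangle inequality to obtain the stated bound. The only difference is organizational: you introduce the threshold $n_0$ explicitly and partition once, whereas the paper phrases the same pigeonhole as a two-case analysis on whether a majority of indices land in $\Bcal_v$; your version is arguably cleaner and sidesteps the small bookkeeping slip in the paper's intermediate count.
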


\begin{proof}
Suppose that at least $\frac{1}{d^m+1}(\#X-m)+m$ values $k\in X$ have $\phi^k(\alpha)\in \Bcal_v$, and let $Y$ be the set of such values.  If $j,i\in Y$ with $j>i$ and $\phi^j(\alpha)\neq\phi^i(\alpha)$, then \eqref{arch:phidiff} of Lemma~\ref{lem:arch:bcalv} applied to $\phi^i(\alpha)\in\Bcal_v$ implies
\begin{eqnarray*}
\frac{1}{d}\lambda_{v}(c)+\log|\phi^j(\alpha)-\phi^i(\alpha)|_v&\leq& d^{j-i+1}\hlambda_{\phi, v}(\phi^i(\alpha))+2\log 2\\
&<& d^{j+1}\hlambda_{\phi, v}(\alpha)+(d+2m-1)\log 2-\log d.
\end{eqnarray*}

So suppose that fewer than $\frac{1}{d^m+1}(\#X-m)+m$ values $k\in X$ witness $\phi^k(\alpha)\in\Bcal_v$.  Thus there are more than $\frac{d^m}{d^m+1}(\#X-m)+m$ values $k\in X$ such that $\phi^k(\alpha)\not\in\Bcal_v$, and so more than $\frac{d^m}{d^m+1}(\#X-m)$ such that $\phi^{k+m}(\alpha)\not\in\Bcal_v$.

By Lemma~\ref{lem:arch:closetobeta}, and the pigeon-hole principle, there is $\beta\in\overline{K}$ with $\phi^m(\beta)=0$ and
$$\log|\phi^k(\alpha)-\beta|_v<-\frac{1}{d}\lambda_v(c)+(d+2m-2)\log 2-\log d$$
for at least $\# (X-m)/(d^m+1)$ values $k\in X$.  If $j$ and $i$ are two of these values, then by the triangle inequality
\begin{eqnarray*}
\log|\phi^j(\alpha)-\phi^i(\alpha)|_v&\leq& \log \left(2\max\{|\phi^j(\alpha)-\beta|_v, |\phi^i(\alpha)-\beta|_v\}\right)\\
&<& -\frac{1}{d}\lambda_v(c)+(d+2m-1)\log 2-\log d.
\end{eqnarray*}
As $\hlambda_{\phi, v}(\alpha)\geq 0$, we have
$$\frac{1}{d}\lambda_v(c)+\log|\phi^j(\alpha)-\phi^i(\alpha)|_v\leq d^{j+1}\hlambda_{\phi, v}(\alpha)+(d+2m-1)\log 2-\log d.$$
\end{proof}


\section{Non-archimedean places}

For this section we fix a valuation $v\in M_K^0$, and  an extension of $|\cdot|_v$ to the algebraic closure of $K$.
As in the previous section, we will define a set of points with reasonably large local height
$$\Bcal_v=\left\{\alpha\in K: \lambda_v(\alpha)>\frac{1}{d}\lambda_v(c)\right\}.$$
 We will also define a `boundary' to the above set,
 $$\Rcal_v=\left\{\alpha\in K:\lambda_v(\alpha)=\frac{1}{d}\lambda_v(c)\right\}.$$
Note that $\Rcal_v$ is empty if $\phi$ has Type I reduction at $v$, a point which greatly simplifies this case.   It should also be noted that the $v$-adic filled Julia set of $\phi$ is entirely contained in $\Rcal_v$.

\begin{lemma}\label{nonarchbv}
If $\alpha\in\Bcal_v$, then $\Ocal_\phi^+(\alpha)\subseteq\Bcal_v$.  Furthermore, $\alpha\in \Bcal_v$ implies
\begin{equation}\label{nonarch:heightdiff}\hlambda_{\phi, v}(\alpha)=\lambda_v(\alpha)\end{equation}
and for all $j>i\geq 1$,
$$\frac{1}{d}\lambda_v(c)+\log|\phi^j(\alpha)-\phi^i(\alpha)|_v\leq d^{j+1}\hlambda_{\phi, v}(\alpha).$$
\end{lemma}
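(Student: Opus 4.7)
The plan is to exploit the ultrametric (strong triangle) inequality, which makes this non-archimedean version significantly cleaner than its archimedean counterpart (Lemma~\ref{lem:arch:bcalv}): the ``error terms'' of the form $\log 2$ and $\log(1\pm 2^{-d})/(d-1)$ all collapse to zero, and the local canonical height coincides exactly with the naive local height on $\Bcal_v$.

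First I would establish forward invariance. Unpacking the definition, $\alpha\in\Bcal_v$ forces $|\alpha|_v>1$ and $|\alpha|_v^d>|c|_v$, with the second inequality strict. By the non-archimedean equality case of the ultrametric inequality, $|\phi(\alpha)|_v=|\alpha^d+c|_v=|\alpha|_v^d$, and in particular
\[
\lambda_v(\phi(\alpha))=d\,\lambda_v(\alpha)>\lambda_v(\alpha)>\tfrac{1}{d}\lambda_v(c),
\]
so $\phi(\alpha)\in\Bcal_v$. Induction then gives $\Ocal_\phi^+(\alpha)\subseteq\Bcal_v$ together with the exact recursion $\lambda_v(\phi^k(\alpha))=d^k\lambda_v(\alpha)$ for all $k\geq 0$. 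Dividing by $d^k$ and passing to the limit, which is now trivial since the sequence is constant, yields $\hlambda_{\phi,v}(\alpha)=\lambda_v(\alpha)$, giving \eqref{nonarch:heightdiff}.

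For the final inequality, the ultrametric bound and the recursion just established give, for $j>i\geq 1$,
\[
\log|\phi^j(\alpha)-\phi^i(\alpha)|_v\leq \max\{\log|\phi^j(\alpha)|_v,\log|\phi^i(\alpha)|_v\}=d^j\,\hlambda_{\phi,v}(\alpha).
\]
It therefore suffices to absorb the $\frac{1}{d}\lambda_v(c)$ term into the gap between $d^j\hlambda_{\phi,v}(\alpha)$ and $d^{j+1}\hlambda_{\phi,v}(\alpha)$. Since $\hlambda_{\phi,v}(\alpha)=\lambda_v(\alpha)>\frac{1}{d}\lambda_v(c)$ and $d^{j+1}-d^j=d^j(d-1)\geq 1$ for $d\geq 2$ and $j\geq 1$, one has
\[
\tfrac{1}{d}\lambda_v(c)<\hlambda_{\phi,v}(\alpha)\leq (d^{j+1}-d^j)\,\hlambda_{\phi,v}(\alpha),
\]
and rearranging produces the claimed bound.

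I do not expect any genuine obstacle here: every step is forced by the ultrametric inequality together with the definition of $\Bcal_v$. The only minor subtlety is keeping straight that $\lambda_v(c)=\max\{\log|c|_v,0\}\geq 0$, so that the strict inequality $\lambda_v(\alpha)>\frac{1}{d}\lambda_v(c)$ is meaningful (and in particular positive whenever $|c|_v>1$) when invoked to control the $\frac{1}{d}\lambda_v(c)$ term.
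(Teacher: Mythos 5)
Your proof is correct and follows essentially the same route as the paper: forward invariance via the ultrametric equality case $|\phi(\alpha)|_v=\max\{|\alpha|_v^d,|c|_v\}=|\alpha|_v^d$, the resulting exact recursion $\lambda_v(\phi^k(\alpha))=d^k\lambda_v(\alpha)$, and absorbing $\tfrac{1}{d}\lambda_v(c)<\hlambda_{\phi,v}(\alpha)$ into the slack between $d^j$ and $d^{j+1}$. The only cosmetic difference is that the paper observes $|\phi^j(\alpha)-\phi^i(\alpha)|_v=|\phi^j(\alpha)|_v$ exactly (since the terms have strictly increasing absolute value), whereas you use the weaker ultrametric $\leq$, which suffices just as well.
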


Note that Lemma~\ref{lem:arch:bcalv} and Lemma~\ref{nonarchbv} immediately imply  that
$$\alpha\in\bigcap_{v\in M_K}\Bcal_v\Longrightarrow \h_\phi(\alpha)\geq \frac{1}{d}h(c).$$

\begin{proof}[Proof of Lemma~\ref{nonarchbv}]
The the condition $\alpha\in \Bcal_v$ tells us that
$|\alpha|_v^d>\max\{|c|_v, 1\}$,
and it follows, by the ultra-metric inequality, that
$$|\phi(\alpha)|_v=\max\{|\alpha^d|_v, |c|_v\}=|\alpha|_v^d>|\alpha|_v.$$
From this it follows that $\phi(\alpha)\in\Bcal_v$, and by induction $\Ocal_\phi^+(\alpha)\subseteq\Bcal_v$.  Induction also shows that
$$\log|\phi^k(\alpha)|_v=d^k\log|\alpha|_v>d^{k-1}\lambda_v(c),$$
from which we have immediately $\hlambda_{\phi, v}(\alpha)=\lambda_v(\alpha)$.
Finally, as $|\alpha|_v>1$ for $\alpha\in\Bcal_v$, we have $|\phi^j(\alpha)|_v>|\phi^i(\alpha)|_v>1$ for all $j>i$, and so
\begin{eqnarray*}
\frac{1}{d}\lambda_v(c)+\log|\phi^j(\alpha)-\phi^i(\alpha)|_v&=&\frac{1}{d}\lambda_v(c)+\log|\phi^j(\alpha)|_v\\
&=&\frac{1}{d}\lambda_v(c)+\hlambda_{\phi, v}(\phi^j(\alpha))\qquad\text{by \eqref{nonarch:heightdiff}}\\
&<& (d^j+1)\hlambda_{\phi, v}(\alpha)\quad\qquad\text{as }\alpha\in\Bcal_v\\
&\leq&d^{j+1}\hlambda_{\phi, v}(\alpha).
\end{eqnarray*}
\end{proof}

\begin{lemma}\label{lemma:rv}
If $\alpha\not\in \Rcal_v$, then $\phi(\alpha)\in \Bcal_v$.
\end{lemma}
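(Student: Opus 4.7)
The plan is to split on whether $\alpha$ lies in $\Bcal_v$ or strictly below it. Since $\lambda_v(\alpha) \geq 0$ always (and likewise $\frac{1}{d}\lambda_v(c) \geq 0$), the hypothesis $\alpha \notin \Rcal_v$ gives two mutually exclusive possibilities: either $\lambda_v(\alpha) > \frac{1}{d}\lambda_v(c)$, i.e., $\alpha \in \Bcal_v$, or $\lambda_v(\alpha) < \frac{1}{d}\lambda_v(c)$.

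In the first case there is nothing to do: Lemma~\ref{nonarchbv} already shows $\Ocal_\phi^+(\alpha) \subseteq \Bcal_v$, so in particular $\phi(\alpha) \in \Bcal_v$.

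For the second case, I would first observe that $\lambda_v(\alpha) \geq 0$ forces $\lambda_v(c) > 0$, so $|c|_v > 1$ and the inequality $\lambda_v(\alpha) < \frac{1}{d}\lambda_v(c)$ is equivalent to the genuine strict inequality $|\alpha|_v^d < |c|_v$. The ultrametric (strong triangle) inequality then gives
$$|\phi(\alpha)|_v \;=\; |\alpha^d + c|_v \;=\; |c|_v,$$
so that $\lambda_v(\phi(\alpha)) = \lambda_v(c)$. Since $d \geq 2$ and $\lambda_v(c) > 0$, we have $\lambda_v(c) > \frac{1}{d}\lambda_v(c)$, and hence $\phi(\alpha) \in \Bcal_v$.

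There is no real obstacle here; the statement is essentially an ultrametric computation. The only mild pitfall is remembering that the case $\lambda_v(c) = 0$ (good reduction) is automatic: then $\alpha \notin \Rcal_v$ means $\lambda_v(\alpha) > 0$, which already places $\alpha$ in $\Bcal_v$, reducing to the first case and leaving the second case vacuous.
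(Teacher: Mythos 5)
Your proof is correct and follows the paper's argument essentially verbatim: split on whether $\alpha \in \Bcal_v$, handle the first case by Lemma~\ref{nonarchbv}, and in the remaining case use the ultrametric equality $|\phi(\alpha)|_v = |c|_v$ together with $\lambda_v(c) > 0$ and $d \geq 2$. The closing remark about the good-reduction case being automatic is a nice clarification but is already implicit in the paper's observation that $\lambda_v(c) > d\lambda_v(\alpha) \geq 0$ forces $\lambda_v(c) > 0$.
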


\begin{proof}
If $\alpha\in\Bcal_v$ then we're done by the previous lemma, so suppose that $\alpha$ is in neither $\Bcal_v$ nor $\Rcal_v$, implying $\lambda_v(\alpha)<\frac{1}{d}\lambda_v(c)$. Then $|\alpha|_v^d<|c|_v$, and so
$$|\phi(\alpha)|_v=\max\{|\alpha|_v^d, |c|_v\}=|c|_v.$$
We have
$$\lambda_v(\phi(\alpha))=\lambda_v(c)>\frac{1}{d}\lambda_v(c),$$
as the condition $\lambda_v(c)>d\lambda_v(\alpha)\geq 0$ ensures that $\lambda_v(c)$ is strictly positive.
\end{proof}

Thus, the problematic points, in the nonarchimedean places, are those in $\Rcal_v$.  If $v(c)\in d\ZZ$, it is quite possible to construct arbitrarily long chains with the property
$$\phi^k(\alpha)\in\Rcal_v$$ for all $k\leq n$,
with $\phi^{n+1}(\alpha)\not\in\Rcal_v$ (simply by choosing $\alpha\in\phi^{-n}(0)$).  In particular, it would appear that no purely local argument can give a useful lower bound on the local height $\hlambda_{\phi, v}(\alpha)$. 

Note that Lemma~\ref{lemma:rv} also puts fairly strong restrictions on the pre-periodic points of $\phi$.  In particular, if $\alpha$ is a pre-periodic point of $\phi$, we must have $dv(\alpha)=v(c)$ for every non-archimedean $v\in M_K$.  This generalizes Corollary~4 of \cite{wr}, which makes this assertion for $d=2$ and $K=\QQ$.  In particular, if $\phi$ has Type I reduction at any primes, then $\phi$ has no pre-periodic points in $K$ at all.

Recall that we have fixed an extension of $|\cdot|_v$ to $\overline{K}$.

\begin{lemma}\label{nonarchbeta}
Let $m=2$ if $d=2$ and $m=1$ otherwise.
Suppose that $v$ is a prime of bad reduction, and that $\phi^m(\alpha)\in\Rcal_v$.  Then there is a root $\beta$ of $\phi^m(\alpha)$ in $\overline{K}$ such that
$$\log|\alpha-\beta|_v\leq -\frac{1}{d}\lambda_v(c)-\log\left|d\right|_v-\log|2|_v$$
\end{lemma}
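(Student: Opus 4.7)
The plan is to mirror the archimedean argument of Lemma~\ref{lem:arch:closetobeta}, using the ultrametric inequality to cleanly remove the slack terms that appeared there. I would split into cases $d\geq 3$ (so $m=1$) and $d=2$ (so $m=2$), using throughout that $|c|_v>1$ at any prime of bad reduction, so that $\phi^m(\alpha)\in\Rcal_v$ translates into $|\phi^m(\alpha)|_v=|c|_v^{1/d}>1$.

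For $d\geq 3$, write $\phi(z)=\prod_i(z-\beta_i)$ with $|\beta_i|_v=|c|_v^{1/d}$ and let $\beta$ be the $v$-adically closest $\beta_i$ to $\alpha$. The standard closest-root fact for ultrametrics (immediate from $|\beta-\beta_j|_v\leq\max\{|\beta-\alpha|_v,|\alpha-\beta_j|_v\}$ and the choice of $\beta$) gives $|\alpha-\beta_j|_v\geq|\beta-\beta_j|_v$ for every $j$, whence
$$\prod_{\beta_j\neq\beta}|\alpha-\beta_j|_v\geq|\phi'(\beta)|_v=|d|_v|c|_v^{(d-1)/d}.$$
Combining with $|\phi(\alpha)|_v=|\alpha-\beta|_v\cdot\prod_{\beta_j\neq\beta}|\alpha-\beta_j|_v=|c|_v^{1/d}$ yields $\log|\alpha-\beta|_v\leq\frac{2-d}{d}\lambda_v(c)-\log|d|_v$, and since $\frac{2-d}{d}\leq-\frac{1}{d}$ for $d\geq 3$ while $-\log|2|_v\geq 0$ in the nonarchimedean setting, the stated bound follows.

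For $d=2$ the one-step estimate degenerates (its exponent in $\lambda_v(c)$ is $0$), so I would iterate. Applying the previous paragraph's calculation with $\phi(\alpha)$ in place of $\alpha$ (using $\phi^2(\alpha)\in\Rcal_v$) produces a root $\gamma$ of $\phi$, with $\gamma^2=-c$, satisfying $|\phi(\alpha)-\gamma|_v\leq|2|_v^{-1}$. Taking $\beta$ to be a root of $z^2=\gamma-c$ (so $\phi(\beta)=\gamma$ and hence $\phi^2(\beta)=0$), I factor $\phi(\alpha)-\gamma=\alpha^2-\beta^2=(\alpha-\beta)(\alpha+\beta)$ and choose the sign of $\beta$ so that $|\alpha-\beta|_v\leq|\alpha+\beta|_v$. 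The ultrametric inequality then gives $|\alpha+\beta|_v\geq|2\beta|_v$, and because $|\gamma|_v=|c|_v^{1/2}<|c|_v$ the triangle equality forces $|\beta|_v=|\gamma-c|_v^{1/2}=|c|_v^{1/2}$. Together these give $|\alpha-\beta|_v\leq|2|_v^{-2}|c|_v^{-1/2}$, which is exactly the target bound when $d=2$.

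The main obstacle is really just organizing the $d=2$ case: the direct one-step closest-root bound is vacuous there, so the argument must pass through a root $\gamma$ of $\phi$ and then exploit $(\alpha-\beta)(\alpha+\beta)=\phi(\alpha)-\gamma$ to produce a root of $\phi^2$. Compared with the archimedean Lemma~\ref{lem:arch:closetobeta}, no small-$|c|_v$ subcase is needed and no power-series expansion of $\beta$ is required, since the ultrametric triangle equality disposes of both $|\alpha+\beta|_v$ and $|\beta|_v$ in one step each.
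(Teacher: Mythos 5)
Your proposal is correct and takes essentially the same route as the paper: the closest-root argument on $\phi$ for $d\geq3$, then, for $d=2$, applying that argument to $\phi(\alpha)$ to locate a root $\gamma$ of $\phi$, factoring $\phi(\alpha)-\gamma=(\alpha-\beta)(\alpha+\beta)$, and using the ultrametric triangle equality. The only (harmless) cosmetic difference is that you justify $|\beta|_v=|c|_v^{1/2}$ directly from $|\gamma-c|_v=|c|_v$ rather than by citing the general fact, stated earlier in the paper's proof, that every preimage of $0$ under an iterate of $\phi$ has absolute value $|c|_v^{1/d}$.
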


\begin{proof}
As $v$ is a prime of bad reduction, recall that $\lambda_v(c)=\log|c|_v>0$, and
note that by the reasoning in Lemma~\ref{nonarchbv}, we must have $|\beta|_v=|c|_v^{1/d}$ for all $\beta\in\bigcup_{k\geq 1}\phi^{-k}(0)$.  Otherwise we have $|\phi^j(\beta)|>|c|_v$ for all $j\geq 1$, and so in particular $0=|\phi^k(\beta)|_v>|c|_v>1$.
We proceed much as in the archimedean case.

Let $\beta$ be a root of $\phi$ satisfying
$$|\alpha-\beta|_v\leq |\alpha-\beta'|_v$$
for all roots $\beta'$ of $\phi$.  Then for any $\phi(\beta')=0$ we have
$$|\beta-\beta'|_v\leq\max\{|\alpha-\beta|_v, |\alpha-\beta'|_v\}\leq |\alpha-\beta'|_v,$$
and so
$$|\phi(\alpha)|_v=\prod_{\beta'}|\alpha-\beta'|_v\geq|\alpha-\beta|_v\prod_{\beta'\neq\beta}|\beta-\beta'|_v=|\alpha-\beta|_v|d\beta^{d-1}|_v,$$
where the products are over roots of $\phi$.  As $|\beta|_v=|\phi(\alpha)|_v=|c|_v^{1/d}$, we have, for $d\geq 3$,
$$|\alpha-\beta|_v=|d|_v^{-1}|c|_v^{(2-d)/d}<|d|_v^{-1}|c|_v^{-1/d},$$
which is the bound above.

If $d=2$, then we have shown, replacing $\alpha$ above by $\phi(\alpha)$ and $\beta$ by $\gamma$,  that $\phi^2(\alpha)\in\Rcal_v$ implies 
$$|\phi(\alpha)-\gamma|_v<|2|_v^{-1}$$
for some root $\gamma$ of $\phi(z)$.  Let $\pm\beta$ be the roots of $\phi(z)-\gamma$, and suppose without loss of generality that
$$|\alpha-\beta|_v\leq |\alpha+\beta|_v.$$
Then $$|2|_v^{-1}\geq |\phi(\alpha)-\gamma|_v=|\alpha-\beta|_v|\alpha+\beta|_v\geq |\alpha-\beta|_v|2\beta|_v,$$
and hence
$$|\alpha-\beta|_v\leq |2|_v^{-2}|c|_v^{-1/2}.$$
\end{proof}

\begin{lemma}\label{lem:nonarch:pigeon}
Let $\alpha\in K$, set $m=2$ if $d=2$ and $m=1$ otherwise, and
suppose $v$ is a prime of bad reduction for $\phi$.   If $X$ is a finite set of non-negative integers, then there is a subset $Y\subseteq X$ containing at least $(\#X-m)/(d^m+1)$ values such that for all $j>i$ in $Y$,
$$\frac{1}{d}\lambda_v(c)+\log|\phi^j(\alpha)-\phi^i(\alpha)|_v\leq d^{j+1}\hlambda_{\phi, v}(\alpha)-\log|d|_v-\log|2|_v.$$
\end{lemma}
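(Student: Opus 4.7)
The plan is to mirror the proof of the archimedean analogue, Lemma~\ref{lem:arch:pigeon}, with the triangle inequality replaced by the ultrametric one and Lemma~\ref{nonarchbeta} substituted for Lemma~\ref{lem:arch:closetobeta}. I would dichotomize on the number of $k \in X$ satisfying $\phi^k(\alpha) \in \Bcal_v$, at a threshold of roughly $(\#X-m)/(d^m+1) + m$.

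If this threshold is met, I take $Y$ to be the set of such $k$ and apply Lemma~\ref{nonarchbv} to the starting point $\phi^i(\alpha) \in \Bcal_v$ for each pair $j > i \in Y$. That lemma gives the sharper bound $\frac{1}{d}\lambda_v(c) + \log|\phi^j(\alpha) - \phi^i(\alpha)|_v \leq d^{j+1}\hlambda_{\phi,v}(\alpha)$, and since $|d|_v \leq 1$ and $|2|_v \leq 1$ the extra terms $-\log|d|_v - \log|2|_v$ in the claimed inequality are nonnegative, so the stated bound follows.

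If instead most of $X$ satisfies $\phi^k(\alpha) \notin \Bcal_v$, I would like to apply Lemma~\ref{nonarchbeta} to each such $\phi^k(\alpha)$, but the hypothesis of that lemma is $\phi^m(\phi^k(\alpha)) \in \Rcal_v$, not merely $\notin \Bcal_v$. This gap is bridged by two short observations. First, by the forward-invariance of $\Bcal_v$ (Lemma~\ref{nonarchbv}) the orbit enters $\Bcal_v$ at most once, so shifting the index by $m$ costs at most $m$ values, leaving more than $d^m(\#X-m)/(d^m+1) - m$ elements of $X$ with $\phi^{k+m}(\alpha) \notin \Bcal_v$. Second, at most one of these $k$ can have $\phi^{k+m}(\alpha) \notin \Rcal_v \cup \Bcal_v$: Lemma~\ref{lemma:rv} would then force $\phi^{k+m+1}(\alpha) \in \Bcal_v$, and forward-invariance would exclude every strictly larger index from the same property. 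For the remaining values, Lemma~\ref{nonarchbeta} yields a root $\beta_k$ of $\phi^m$ within distance $|d|_v^{-1}|2|_v^{-1}|c|_v^{-1/d}$ of $\phi^k(\alpha)$, and pigeonholing over the at most $d^m$ distinct roots of $\phi^m$ isolates a common $\beta$ and produces $Y$. For $j > i \in Y$, the ultrametric inequality gives $|\phi^j(\alpha) - \phi^i(\alpha)|_v \leq \max\{|\phi^j(\alpha) - \beta|_v, |\phi^i(\alpha) - \beta|_v\}$, and combined with $\hlambda_{\phi,v}(\alpha) \geq 0$ this yields the required bound.

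The main obstacle is the bookkeeping in the second case: the threshold in the first case must absorb the several small losses (the shift by $m$, the at-most-one escape from $\Rcal_v \cup \Bcal_v$, and the factor $d^m$ from pigeonholing) so that the surviving $Y$ really contains at least $(\#X-m)/(d^m+1)$ elements. All the conceptual content, however, has already been established in Lemma~\ref{lemma:rv} and the discussion following it: in the non-archimedean setting any iterate escaping $\Bcal_v$ must, with at most one exception, land in the critical annulus $\Rcal_v$, hence close to a preimage of $0$ under $\phi^m$, which is precisely what lets Lemma~\ref{nonarchbeta} feed into the ultrametric triangle argument.
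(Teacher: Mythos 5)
Your overall architecture is right, and your observations about the orbit structure (forward-invariance of $\Bcal_v$, and at most one exception outside $\Bcal_v \cup \Rcal_v$ by Lemma~\ref{lemma:rv}) are correct. The problem is that the ``at most one exception'' workaround genuinely costs one more element than the archimedean bookkeeping allows, and that loss cannot be absorbed without weakening the conclusion. Tracing it through: in the bad case one obtains more than $\frac{d^m}{d^m+1}(\#X-m)+m$ values of $k$ with $\phi^k(\alpha)\notin\Bcal_v$; shifting by $m$ leaves more than $\frac{d^m}{d^m+1}(\#X-m)$ with $\phi^{k+m}(\alpha)\notin\Bcal_v$; discarding your single exception leaves more than $\frac{d^m}{d^m+1}(\#X-m)-1$; and pigeonholing over the $d^m$ roots of $\phi^m$ then only guarantees a class of size $\geq\frac{1}{d^m+1}(\#X-m)-\frac{1}{d^m}$. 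Since the relevant quantities are integers this fails to give $\geq\frac{1}{d^m+1}(\#X-m)$ precisely when $\#X-m\equiv 1\pmod{d^m+1}$. You flag the bookkeeping as ``the main obstacle'' but then assert it can be managed, which it cannot be along the route you describe.

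The resolution, and what makes the paper's one-line proof honest, is that Lemma~\ref{nonarchbeta} is stated under a hypothesis stronger than its proof requires. Inspect that proof: the only place $\phi^m(\alpha)\in\Rcal_v$ is used is to substitute $|\phi(\alpha)|_v=|c|_v^{1/d}$ into the inequality $|\phi(\alpha)|_v\geq|\alpha-\beta|_v\,|d\beta^{d-1}|_v$, but an upper bound on $|\phi(\alpha)|_v$ is all that is needed to bound $|\alpha-\beta|_v$ from above, and $|\phi^m(\alpha)|_v\leq|c|_v^{1/d}$ already holds whenever $\phi^m(\alpha)\notin\Bcal_v$ (recall $v$ is a bad prime, so $\lambda_v(c)>0$). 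The facts $|\beta|_v=|c|_v^{1/d}$ for $\beta\in\bigcup_{k\geq 1}\phi^{-k}(0)$, and (when $d=2$) $|\gamma|_v=|c|_v^{1/2}$, hold unconditionally. So the conclusion of Lemma~\ref{nonarchbeta} is valid under the hypothesis $\phi^m(\alpha)\notin\Bcal_v$, matching exactly the archimedean hypothesis in Lemma~\ref{lem:arch:closetobeta}, and the counting then goes through verbatim: no separate treatment of $\Rcal_v$ is needed, and there is no extra loss to absorb. Your invocation of Lemma~\ref{lemma:rv} is a correct observation about the dynamics but is a detour; the intended (and tight) proof is to notice that $\Rcal_v$ can be replaced by the complement of $\Bcal_v$ in the hypothesis of Lemma~\ref{nonarchbeta}.
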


\begin{proof}
The lemma follows from the above in exactly the same way that Lemma~\ref{lem:arch:pigeon} follows from the other lemmas of Section~\ref{arch}.
\end{proof}

\begin{lemma}\label{goodred}
Suppose that $v$ is a prime of good reduction.  Then for all $j>i$,
$$\log|\phi^j(\alpha)-\phi^i(\alpha)|\leq d^{j+1}\lambda_{\phi, v}(\alpha).$$
\end{lemma}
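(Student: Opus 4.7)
The plan is to split into two cases based on whether $\alpha$ belongs to $\Bcal_v$ or not. First I would observe that at a prime of good reduction we have $v(c)\geq 0$, hence $|c|_v\leq 1$ and $\lambda_v(c)=0$, so $\Bcal_v=\{\alpha\in K:|\alpha|_v>1\}$ and its complement (which equals $\Rcal_v$ here) is $\{\alpha\in K:|\alpha|_v\leq 1\}$. The statement as written references $\lambda_{\phi,v}$, which I interpret as $\hlambda_{\phi,v}$ (the only local height attached to $\phi$ that has been defined); either reading is nonnegative, which is the fact that drives the trivial case.

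In the case $\alpha\in\Bcal_v$, Lemma~\ref{nonarchbv} directly supplies
\[
\tfrac{1}{d}\lambda_v(c)+\log|\phi^j(\alpha)-\phi^i(\alpha)|_v\leq d^{j+1}\hlambda_{\phi,v}(\alpha)
\]
for $j>i\geq 1$, and dropping the vanishing term $\tfrac{1}{d}\lambda_v(c)=0$ gives the desired bound. The boundary case $i=0$ requires only a brief additional remark: the proof of Lemma~\ref{nonarchbv} shows that iterates in $\Bcal_v$ have strictly increasing $v$-absolute value, so the ultrametric inequality yields $|\phi^j(\alpha)-\alpha|_v=|\phi^j(\alpha)|_v=|\alpha|_v^{d^j}$, and therefore $\log|\phi^j(\alpha)-\alpha|_v=d^j\hlambda_{\phi,v}(\alpha)\leq d^{j+1}\hlambda_{\phi,v}(\alpha)$.

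In the remaining case $\alpha\notin\Bcal_v$ we have $|\alpha|_v\leq 1$. A one-line induction using the ultrametric inequality and $|c|_v\leq 1$ gives $|\phi^k(\alpha)|_v\leq\max\{|\alpha|_v^d,|c|_v\}\leq 1$ for all $k\geq 0$, hence $|\phi^j(\alpha)-\phi^i(\alpha)|_v\leq 1$ and the left-hand side of the claimed inequality is nonpositive. Since $\hlambda_{\phi,v}(\alpha)\geq 0$ for every $\alpha$, the right-hand side is nonnegative and the bound holds trivially. There is no genuine obstacle here; this lemma is essentially a clean application of the ultrametric inequality together with the machinery already developed for $\Bcal_v$, and its role is to let the good primes be absorbed painlessly into the pigeonhole/product-formula argument used for the main theorem.
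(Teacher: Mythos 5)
Your proof is correct and follows essentially the same route as the paper's: at a prime of good reduction $|c|_v\leq 1$, so $\Bcal_v=\{|\alpha|_v>1\}$, and the ultrametric inequality makes membership in $\Bcal_v$ an invariant of the whole forward orbit; from there the in-$\Bcal_v$ case reduces to Lemma~\ref{nonarchbv} and the out-of-$\Bcal_v$ case is trivial because both sides are controlled by $0$. The one cosmetic difference is that the paper phrases its case split on $\phi^i(\alpha)$ (and even allows for the vacuous subcase $\phi^i(\alpha)\notin\Bcal_v$, $\phi^j(\alpha)\in\Bcal_v$, which cannot occur at good reduction), whereas you split on $\alpha$ itself; these are equivalent given your orbit-invariance observation. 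Your explicit remark about the boundary case $i=0$ is a nice touch, since Lemma~\ref{nonarchbv} is formally stated only for $j>i\geq 1$, a point the paper quietly elides.
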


\begin{proof}
If $\phi^i(\alpha)\in\Bcal_v$, then this follows from Lemma~\ref{nonarchbv}.  If $\phi^i(\alpha)\not\in\Bcal_v$ then $\log|\phi^i(\alpha)|_v\leq 0$.  If $\phi^j(\alpha)\in\Bcal_v$ then
$$\log|\phi^j(\alpha)-\phi^i(\alpha)|_v=\log|\phi^j(\alpha)|_v=\hlambda_{\phi, v}(\phi^j(\alpha))=d^j\hlambda_{\phi, v}(\alpha).$$
If $\phi^j(\alpha)\not\in\Bcal_v$, then
$$\log|\phi^j(\alpha)-\phi^i(\alpha)|_v\leq\log\max\{|\phi^j(\alpha)|_v, |\phi^i(\alpha)|_v\}\leq 0\leq d^j\hlambda_{\phi, v}(\alpha).$$
\end{proof}


\section{Proof of Theorem~\ref{mainresult}}

Let $K$, $\phi$, $N$, etc. be as in the statement of the result.   To begin, we will assume that $\alpha\in\phi(K)$.

Let $X=\{0, 1, \ldots, N-1\}$, and let $v\in M_K$ be either an archimedean valuation or a valuation at which $\phi$ has reduction Type II.  For convenience, write
$$\delta_v=\begin{cases}(d+2m-1) \log2-\log d & \text{if }v\in M_K^{\infty}\\ -\log|d|_v-\log|2|_v& \text{if }v\in M_K^0.\end{cases}$$
  By Lemma~\ref{lem:arch:pigeon} or Lemma~\ref{lem:nonarch:pigeon}, we may choose a subset $X'\subseteq X$ such that
$$\# X' \geq \frac{1}{(d^m+1)}(\# X-m)\geq\frac{2}{d^m}((d^m+1)^{r+s}-1)$$
and such that for all $j>i\in X'$, we have
\begin{equation}\label{main}\frac{1}{d}\lambda_v(c)+\log|\phi^j(\alpha)-\phi^i(\alpha)|_v\leq d^{j+1}\hlambda_{\phi, v}(\alpha)+\delta_v.\end{equation}
Proceeding by induction, we have a set $Y\subseteq X$ with
$$\# Y\geq \frac{2}{d^m}((d^m+1)^1-1)=2$$
such that \eqref{main} holds for every valuation in $M_K^\infty$ or at which $\phi$ has Type II reduction (for all $j>i$ in $Y$).  For each of the remaining valuations $v$ of bad reduction we have supposed that $\alpha\in\phi(K)\subseteq\Bcal_v$, and hence by Lemma~\ref{nonarchbv} have \eqref{main} again.  Finally, for primes $v$ of good reduction, Lemma~\ref{nonarchbv} or Lemma~\ref{goodred} gives us \eqref{main}, depending on whether or not $\alpha\in\Bcal_v$.

Let $j>i$ be two distinct values in $Y$.  Assuming $\phi^j(\alpha)\neq\phi^i(\alpha)$, the product rule gives us (recalling that $n_v=[K_v:\QQ_v]$)
$$\sum_{v\in M_K}n_v\log|\phi^j(\alpha)-\phi^i(\alpha)|_v=0,$$
and so summing \eqref{main} over all valuations  gives us
\begin{eqnarray*}
[K:\QQ]\frac{1}{d}h(c)&=&\sum_{v\in M_K}n_v\left(\frac{1}{d}\lambda_v(c)+\log|\phi^j(\alpha)-\phi^i(\alpha)|_v\right)\\
&\leq &\sum_{v\in M_K}n_v\left(d^{j+1}\hlambda_{\phi, v}(\alpha)+\delta_v\right)\\
&= &[K:\QQ]d^{j+1}\h_\phi(\alpha)+\sum_{v\in M_K}n_v\delta_v\\
&=& [K:\QQ]\left(d^{j+1}\h_\phi(\alpha)+(d+2m)\log 2\right)
\end{eqnarray*}
As $j\leq N-1$, we have
$$\frac{1}{d}h(c)\leq d^{N}\h_\phi(\alpha)+(d+2m)\log 2$$
for points $\alpha\in\phi(K)$.  For points $\alpha\in K$, we may apply this result to $\phi(\alpha)$ to obtain the estimate in Theorem~\ref{mainresult}.

Note that a slightly stronger, and more complicated, statement may be derived by distinguishing completions $K_v$ in which $\phi(z)$ has a root from those in which it has none.


\section{Specific computations for $d=2$ and  $c\in\ZZ$}

Theorem~\ref{mainresult} implies that there is an absolute constant $A>0$  such that if $c\in \ZZ$ and $\alpha\in\QQ$ is not a pre-periodic point for $\phi(z)=z^2+c$, then
\begin{equation}\label{A}\h_\phi(\alpha)\geq A\max\{\log|c|, 1\}.\end{equation}
The theorem, applied directly, allows us to conclude that
$$\h_\phi(\alpha)\geq 2^{-14}(\log|c|-12\log2),$$
but this is far from optimal, and is in fact trivial for $|c|\leq 4096$.  It would be interesting to know how large we may take the constant $A$ in \eqref{A}.

  For $c\geq 1$, one has $\phi(\alpha)>|\alpha|$ for all $\alpha\in\QQ$.  This ensures that $\phi$ can have no pre-periodic points, and makes it essentially trivial to construct a very strong lower bound on $\h_\phi(\alpha)$.  For $c\leq -1$, however, things are slightly more tricky.  Indeed, for negative values of $c$ one may actually encounter pre-periodic points, which have canonical height zero.  For example, we see the pre-periodic structure
$$-\frac{1\pm m}{2}\stackrel{\phi}{\longrightarrow} \frac{1\pm m}{2}\stackrel{\phi}{\longrightarrow} \frac{1\pm m}{2}$$
for $c=(1-m^2)/4$ (with $m\in\ZZ$ odd)
and
$$- \frac{1\pm m}{2}\stackrel{\phi}{\longrightarrow} \frac{1\mp m}{2}\stackrel{\phi}{\longrightarrow} \frac{1\pm m}{2}\stackrel{\phi}{\longrightarrow} \frac{1\mp m}{2}$$
for $c=-(m^2+3)/4$ (with $m\in\ZZ$ odd).

Refining the proof of Theorem~\ref{mainresult} somewhat for this special case, we are able to prove the following:
\begin{proposition}
Let $c\in\ZZ$.  Then for all $\alpha\in\QQ$, if $\alpha$ is a wandering point for $\phi(z)=z^2+c$, then
$$\h_\phi(\alpha)\geq\frac{1}{32}\max\{\log|c|, 1\}.$$
Furthermore, if $\alpha$ is a pre-periodic point for $\phi$, then $\alpha$ and $c$ occur in one of the two families detailed above.
\end{proposition}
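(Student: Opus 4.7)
The proposition has two independent parts: a classification of pre-periodic orbits and a height bound for wandering points.

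For the pre-periodic classification, I would apply Lemma~\ref{lemma:rv} at every rational prime. Since $c\in\ZZ$ yields good reduction at every non-archimedean place, $\Rcal_p=\{\alpha\in\QQ:|\alpha|_p\le 1\}$. A pre-periodic $\alpha$ cannot have any iterate in $\Bcal_p$ (else the orbit escapes by Lemma~\ref{nonarchbv}), and Lemma~\ref{lemma:rv} then forces every iterate into $\Rcal_p$; intersecting over all primes gives $\alpha\in\ZZ$. The integer solutions of the fixed-point equation $\alpha^2-\alpha+c=0$ yield the first family (with $c=(1-m^2)/4$, $m$ odd), and the integer solutions of the period-$2$ cofactor $\alpha^2+\alpha+1+c=0$ of $\phi^2(\alpha)-\alpha$ yield the second (with $c=-(m^2+3)/4$). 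To rule out integer cycles of length $n\ge 3$, I would cite classical results (Walde--Russo for $n=3$, Morton--Silverman for $n\ge 4$), or give a short direct argument via the linearisation $\phi\bmod 2$ together with size bounds on integer orbits.

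For the lower bound on wandering $\alpha$, I would specialise the proof of Theorem~\ref{mainresult} to $K=\QQ$, $c\in\ZZ$: there are no primes of bad reduction ($s=0$), and $\QQ$ has a single archimedean place ($r=1$), so only the archimedean pigeon-hole step of Section~\ref{arch} is non-trivial. The case $c\ge 0$ is immediate: iteration gives $|\phi^k(\alpha)|\ge c^{2^{k-1}}$ eventually for $c\ge 1$, and $\h_\phi=h\ge\log 2$ for $c=0$. Assume $c\le -1$, and split on the escape time of $\alpha$: if $\phi^k(\alpha)\in\Bcal_v$ for some $k\le 4$, then Lemma~\ref{lem:arch:bcalv} gives $\hlambda_{\phi,v}(\phi^k(\alpha))>\frac{1}{2}\log|c|$, so $\h_\phi(\alpha)\ge \hlambda_{\phi,v}(\alpha)=2^{-k}\hlambda_{\phi,v}(\phi^k(\alpha))>\log|c|/32$, done.

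In the remaining case $\phi^k(\alpha)\notin\Bcal_v$ for all $k\in\{0,\ldots,4\}$: for each $k\in\{2,3,4\}$, Lemma~\ref{lem:arch:closetobeta} provides a root $\beta_k$ of $\phi^2$ with $|\phi^{k-2}(\alpha)-\beta_k|_v\le 4|c|^{-1/2}$. A sharpened pigeon-hole, exploiting the antipodal pairing $\{\pm\beta_+,\pm\beta_-\}$ of the roots of $\phi^2$ over $\RR$ for $c<0$, together with the observation that the $\phi$-orbit of any root of $\phi^2$ enters $\Bcal_v$ within three further iterations (the orbit being a root of $\phi^2$, then a root of $\phi$, then $0$, then $c$, which lies in $\Bcal_v$ for $|c|>4$), produces a pair $i<j\le 3$ whose iterates share a common closest root. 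The product rule, combined with the inequality~\eqref{main} at each finite place (which holds with $\delta_v=0$ by Lemma~\ref{nonarchbv} or Lemma~\ref{goodred} since every finite prime has good reduction), then yields $\frac{1}{2}\log|c|\le 2^{j+1}\h_\phi(\alpha)+\delta_\infty\le 16\,\h_\phi(\alpha)+O(1)$, and so $\h_\phi(\alpha)\ge\log|c|/32-O(1)$. The hard step is pinning down this last pigeon-hole to guarantee $j\le 3$ rather than the $j\le 4$ that Lemma~\ref{lem:arch:pigeon} would give in isolation; the additive $O(1)$ loss is then absorbed by verifying the bound directly, via enumeration of the finitely many small-height wandering orbits, for the finitely many values of $c$ with $|c|$ below the threshold at which the asymptotic argument becomes trivial.
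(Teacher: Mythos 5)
Your overall architecture matches the paper's: split on whether $\phi^4(\alpha)$ enters $\Bcal_\infty$, use the archimedean escape to get $\h_\phi(\alpha)\geq 2^{-4}\cdot\frac12\log|c|$ in the escape case, and pigeon-hole plus the product rule in the non-escape case, deferring small $|c|$ to computation. However, the pigeon-hole step is the crux, and the version you sketch does not close the bound.

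Concretely, you aim for a pair $i<j\leq 3$ and then plug into the \eqref{main}-style inequality with $d^{j+1}=16$, getting $\h_\phi(\alpha)\geq\frac{1}{32}\log|c|-\frac{\delta_\infty}{16}$. Since $\delta_\infty>0$, this is strictly weaker than $\frac{1}{32}\log|c|$ for \emph{every} $c$, not merely small $c$, so the $O(1)$ deficit cannot be absorbed by a finite verification: the non-escape branch must beat $\frac{1}{32}\log|c|$ outright for $|c|$ large. What the paper actually does is both simpler and sharper. Applying Lemma~\ref{lem:arch:closetobeta} at the three points $\phi^0(\alpha),\phi^1(\alpha),\phi^2(\alpha)$ (each satisfies $\phi^2(\phi^i(\alpha))=\phi^{i+2}(\alpha)\notin\Bcal_\infty$ by forward-invariance of $\Bcal_\infty$) and pigeonholing onto the \emph{two} antipodal pairs of roots of $\phi^2$ immediately yields $0\leq i<j\leq 2$ with $|\phi^j(\alpha)\pm\phi^i(\alpha)|\leq\frac{14}{3}|c|^{-1/2}$. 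Moreover, the paper does not insert the lossy $d^{j+1}$ from \eqref{main}; it uses the product rule directly with $\h_\phi(\phi^j(\alpha))=2^j\h_\phi(\alpha)\leq 4\h_\phi(\alpha)$, arriving at $\h_\phi(\alpha)\geq\frac18\log|c|-\frac14\log\frac{14}{3}$, which exceeds $\frac{1}{32}\log|c|$ for $c\leq -61$. Your heuristic that ``the $\phi$-orbit of any root of $\phi^2$ enters $\Bcal_v$ within three iterations'' plays no role in the pigeon-hole and is a red herring. You also do not spell out how the $\pm$ sign is carried through the product-rule step (the point being that the non-archimedean bounds are sign-insensitive and $\phi^j(\alpha)=\pm\phi^i(\alpha)$ implies $\phi^{j+1}(\alpha)=\phi^{i+1}(\alpha)$).

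On the pre-periodic classification your route is genuinely different: you use $\Rcal_p$ to force $\alpha\in\ZZ$ and then invoke external results to bound integer cycle lengths. That is workable but heavier than necessary; the paper gets period at most $2$ for free from the same pigeon-hole (the option $\phi^j(\alpha)=\pm\phi^i(\alpha)$, $j\leq 2$, is exactly the pre-periodic case), so solving $\phi(z)=z$ and $\phi^2(z)=z$ finishes the classification for $c\leq-61$, with the remaining range handled by the same computation needed for the height bound. If you pursue your route, be precise about which result you are citing and check that the ``short direct argument via linearisation mod $2$'' you allude to actually rules out all cycles of length $\geq 3$, tails included.
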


Although this bound is certainly an improvement on the blind application of Theorem~\ref{mainresult}, it still seems to be conservative.  In the author's computations, every wandering point $\alpha$ satisfied $\h_\phi(\alpha)\geq\frac{1}{8}\log|c|$.  On the other hand, for any one can construct integers $c<0$ and $\alpha$ with $\h_\phi(\alpha)\leq \frac{1}{8}\log|c|+O(1)$, and one such construction is given below.  It would seem, then, that the `true' value of the constant $A$ in \eqref{A}, if $c$ and $\alpha$ are allowed to be arbitrary, is $1/8$.

\begin{remark}
A similar result may be proven, using almost the exact same argument, for $z^d+c$ with $d\geq 3$ and $c\in\ZZ$.  In particular, if $\alpha\in\QQ$ and $\phi(\alpha)\neq\phi^2(\alpha)$, then
$$\h_\phi(\alpha)\geq\max\left\{\frac{d-2}{d^2}\log|c|+\frac{\log d}{d}-\frac{d+1}{d}\log2, \frac{1}{d^2}\log|c|+\frac{1}{d^2}\log\frac{3}{2}\right\},$$
and so
$$\h_\phi(\alpha)\geq\frac{1}{d^2}\log|c|+O(1)$$
for $|c|$ large enough.  On the other hand, for $|c|\geq 3$, we see that
$$\h_\phi(0)=\frac{1}{d}\h_\phi(c)\leq \frac{1}{d}\log|c|+\frac{1}{d}\log\frac{3}{2}$$
by the estimate on the difference $|\h_\phi(c)-h(c)|$ implied by Lemma~\ref{lem:computations} below.
\end{remark}

\begin{remark}
The classification of pre-periodic behaviour of $\phi(z)=z^2+c$ for $c\in\ZZ$ offered by the proposition is almost certainly not new.  The problem is much harder (indeed, unsolved) for $c\in\QQ$.  There are certainly infinitely many values of $c\in\QQ$ such that $z^2+c$ has a periodic point of period $3$, but an examination of the parametrization of these values \cite[pp.~157--158]{js:ads} confirms that only finitely many may be $S$-integral for any finite set of primes $S$.  Indeed, it is not hard to show (using Theorems~2.21 and 2.28 of \cite{js:ads}) that $\phi(z)=z^2+c$ can have no periodic point of period greater than 4 unless $\mathrm{ord}_2(c)<0$.  There are no values of $c\in\QQ$ such that $\phi$ has a point of period 4 or 5, and it is conjectured that there are no values yielding points of period greater than 3 (see \cite{poonen} and the discussion in \cite[pp.~95--97]{js:ads}).
\end{remark}

The remainder of this section is devoted to the proof of the proposition, as well as to various comments about the computations involved.  We will, throughout, assume that $c$ is an integer.   It is useful to note that in this case $\hlambda_{\phi, v}(\alpha)=\lambda_v(\alpha)$ for all non-archimedean $v$, and so
$$\h_\phi\left(\frac{a}{b}\right)=\hlambda_{\phi, \infty}\left(\frac{a}{b}\right)+\log|b|.$$

First of all, as noted above, the problem is essentially trivial if $c>0$.  Here we have
$|\phi(\alpha)|\geq c$,
for all $\alpha\in\QQ$, and so
$$|\phi^k(\alpha)|\geq c^{2^{k-1}}.$$
Thus
\begin{equation}\label{est1}\h_\phi(\alpha)\geq\hlambda_{\phi, \infty}(\alpha)\geq \frac{1}{2}\log c.\end{equation}
For $c=1$,  we may simply note that
$$|\phi^2(\alpha)|\geq 2,$$
and so $|\phi^k(\alpha)|\geq 2^{2^{k-2}}.$  It follows that
$$\h_\phi(\alpha)\geq \frac{1}{4}\log 2,$$
and so for $c\geq 1$, we have
$$\h_\phi(\alpha)\geq \frac{\log 2}{4}\max\{\log c, 1\}$$
for all $\alpha\in\QQ$.  On the other hand, if $c\geq 5$ then $c\in\Bcal_\infty$, and so Lemma~\ref{lem:arch:bcalv} gives us
$$\h_\phi(c)=\hlambda_{\phi, \infty}(c)\leq \log c+\log \frac{3}{2}.$$
In particular, as $\phi(0)=c$, we have
\begin{equation}\label{cposupper}\h_\phi(0)\leq \frac{1}{2}\log c+\frac{1}{2}\log\frac{3}{2},\end{equation}
showing that one cannot do much better than \eqref{est1} (in particular, the quotient of the upper bound in \eqref{cposupper} and the lower bound in \eqref{est1} is $1+o(1)$ as $c\rightarrow\infty$).

From this point forward, we will restrict attention to negative values $c\in\ZZ$.
 By Lemma~\ref{lem:arch:closetobeta}, if $\phi^2(\alpha)\not\in\Bcal_\infty$, then there is a root $\beta$ of $\phi^2(z)$ such that
 $$\left|\alpha-\beta\right|\leq 8|c|^{-1/2}.$$
 A closer examination of the proof shows that we may take $|\alpha-\beta|<\frac{7}{3}|c|^{-1/2}$ if we stipulate $c\leq -49$.
 As $\phi^2(z)$ has two pairs of roots differing only in sign, the pigeon-hole principle tells us that if $\alpha_0, \alpha_1, \alpha_2\in\QQ$ all satisfy $\phi^2(\alpha_i)\not\in\Bcal_\infty$, then there are values $0\leq i<j\leq 2$ such that
 $$|\alpha_i\pm\alpha_j|\leq \frac{14}{3}|c|^{-1/2}.$$
 In particular, if $\phi^4(\alpha)\not\in\Bcal_\infty$, then there must be two values  $0\leq i<j\leq 2$, such that
 $$|\phi^j(\alpha)\pm\phi^i(\alpha)|\leq \frac{14}{3}|c|^{-1/2}.$$
 Noting that $|\phi_j(\alpha)\pm\phi^i(\alpha)|_v\leq |\phi^j(\alpha)|_v$ for all finite valuations $v$, we have
 either $\phi^j(\alpha)=\pm\phi^i(\alpha)$, or else
 $$\frac{1}{2}\log|c|-\log \frac{14}{3}\leq -|\phi^j(\alpha)-\phi^i(\alpha)|=\sum_{v\neq\infty}|\phi^j(\alpha)-\phi^i(\alpha)|\leq \h_\phi(\phi^j(\alpha)).$$
If the latter holds, then
\begin{equation}\label{est3}\h_\phi(\alpha)\geq \frac{1}{8}\log|c|-\frac{1}{4}\log \frac{14}{3}.\end{equation}
If, on the other hand, $\phi^4(\alpha)\in\Bcal_\infty$, Lemma~\ref{lem:arch:bcalv} provides
$$\hlambda_{\phi, \infty}(\phi^4(\alpha))\geq \frac{1}{2}\max\{\log|c|, 0\},$$
and so
\begin{equation}\label{est2}\h_\phi(\alpha)\geq \frac{1}{32}\log|c|.\end{equation}
For $c\leq -61$, the bound in \eqref{est2} is strictly weaker than that in \eqref{est3}.
Thus, for $c\leq -61$, we have verified that \eqref{est2} holds for $\alpha\in\QQ$ not pre-periodic.

Note that if $\phi^j(\alpha)=\pm\phi^i(\alpha)$, then $\phi^{j+1}(\alpha)=\phi^{i+1}(\alpha)$.  In particular, the above tells us that (still with $c\leq -61$) if $\alpha$ is pre-periodic for $\phi$, then $\alpha$ has period at most 2, although there may be a `tail' before the periodicity.  By solving the equations $\phi(z)=z$ and $\phi^2(z)=z$, we may find all examples of such behaviour, which are just those listed above.

All that remains is to verify the proposition for values $-61\leq c\leq -1$.  As the computation of these canonical heights is not entirely routine, a justification of the accuracy of the computations is in order.  We begin with a lemma, whose proof will be deferred to the end of the section.

\begin{lemma}\label{lem:computations}
Suppose $c\in\ZZ$ is non-zero, and let $\phi(z)=z^d+c$.  Then for all $\alpha\in\QQ$
$$\left|\h_{\phi}(\alpha)-h(\alpha)\right|\leq\frac{\log|2c|}{d-1}.$$
\end{lemma}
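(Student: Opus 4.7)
The plan is to prove this by the standard telescoping identity for canonical heights. Since $\hat{h}_\phi(\alpha)=\lim_{k\to\infty}h(\phi^k(\alpha))/d^k$, we have the formal identity
$$\hat{h}_\phi(\alpha)-h(\alpha)=\sum_{k=0}^{\infty}\frac{h(\phi^{k+1}(\alpha))-d\,h(\phi^k(\alpha))}{d^{k+1}},$$
so it suffices to prove the single-step estimate $|h(\phi(\beta))-d\,h(\beta)|\leq\log|2c|$ for every $\beta\in\QQ$; summing the resulting geometric series $\sum_{k\geq 0}d^{-k-1}=1/(d-1)$ then gives the lemma.

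To prove the one-step estimate I would exploit the integrality of $c$. Write $\beta=a/b$ in lowest terms, so that $\gcd(a,b)=1$ and $h(\beta)=\log\max\{|a|,|b|\}$. Then
$$\phi(\beta)=\frac{a^d+cb^d}{b^d},$$
and because $c\in\ZZ$ and $\gcd(a^d,b^d)=1$, the numerator $a^d+cb^d$ is coprime to $b^d$; so this representation is already in lowest terms, giving $h(\phi(\beta))=\log\max\{|a^d+cb^d|,|b|^d\}$. The upper bound is immediate from the triangle inequality: $|a^d+cb^d|\leq (1+|c|)\max\{|a|,|b|\}^d$ and $|b|^d\leq\max\{|a|,|b|\}^d$, which gives $h(\phi(\beta))\leq d\,h(\beta)+\log(1+|c|)\leq d\,h(\beta)+\log|2c|$.

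For the lower bound I would split into two cases based on whether $|a|\leq|b|$ or $|a|>|b|$. In the first case the $|b|^d$ term already supplies $h(\phi(\beta))\geq d\,h(\beta)$. In the second case, if $|a|^d\geq 2|c||b|^d$ then $|a^d+cb^d|\geq|a|^d-|c||b|^d\geq|a|^d/2$, while otherwise $|b|^d>|a|^d/(2|c|)$; in either subcase $\max\{|a^d+cb^d|,|b|^d\}\geq\max\{|a|,|b|\}^d/(2|c|)$, yielding $h(\phi(\beta))\geq d\,h(\beta)-\log|2c|$.

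I do not expect any serious obstacle: the integrality of $c$ makes the coprimality argument completely clean, and once the one-step estimate is in hand the telescoping is routine. The only mildly delicate point is the second case of the lower bound, where one must notice that even if the leading cancellation in $a^d+cb^d$ is severe, the denominator $b^d$ in the reduced fraction supplies enough height on its own.
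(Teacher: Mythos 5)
Your proposal is correct and takes essentially the same approach as the paper: write $\alpha=a/b$ in lowest terms, observe that $\phi(\alpha)=(a^d+cb^d)/b^d$ is already reduced because $c\in\ZZ$, prove the one-step estimate $|h(\phi(\alpha))-d\,h(\alpha)|\leq\log|2c|$ by a case split on $|a|$ versus $|b|$ (and a further split to handle possible cancellation in the numerator), and then telescope. The only cosmetic difference is that you split the bad case on $|a|^d\gtrless 2|c||b|^d$ while the paper splits on $|a^d+cb^d|\gtrless\tfrac{1}{2}|a|^d$; both yield the same bound.
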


Note that by the remarks above, in this case an estimate on the difference between $\h_\phi$ and $h$ is tantamount to an estimate on the difference between $\hlambda_{\phi, \infty}$ and $\lambda_\infty$.

This lemma gives us a reasonably efficient way of computing canonical heights.  If $\epsilon>0$ is any fixed value, we may select $$m=\left\lceil\frac{-\log \epsilon+\log\log|2c|}{\log 2}\right\rceil$$
to ensure that
$$\left|\frac{1}{2^m}\lambda_{\phi, \infty}\left(\phi^m\left(\frac{a}{b}\right)\right)+\log|b|-\h_\phi\left(\frac{a}{b}\right)\right|<\epsilon.$$
In this way we may compute the canonical height of a point to arbitrary accuracy.  Given a single point $\alpha\in\QQ$ of a certain height, the lemma tells us that the only points in $\QQ$ with canonical height strictly less than that of $\alpha$ are those with absolute logarithmic height at most $\h_\phi(\alpha)+\log|2c|$.  Thus if we have a suspected candidate for the point of least canonical height, we may check all points with absolute height less than this bound and thereby find an absolute lower bound on $\h_\phi(\alpha)$. In our computations, some points turn out to have (computed) height less than $\epsilon$, but these turned out in every case to be pre-periodic points in the families above.  Some data from the computation appears in Figure~\ref{figure}, and indicates that the `true' lower bound on $\h_\phi(\alpha)/\log|c|$, for wandering points $\alpha$, may be $1/8$.  Note that the data in Figure~\ref{figure} indicates quite clearly that a better lower bound is available for $\h_\phi$ when the points of period 1 or 2 are $\QQ$-rational.  If the fixed points $\gamma_i$ are in $\QQ$, then they are (by trivial estimates) poorly approximated by $\QQ$-rational wandering points:
$$\left|\frac{a}{b}-\gamma_i\right|\gg \frac{1}{b}\qquad\text{for}\qquad\frac{a}{b}\neq\gamma_i;$$
if $\gamma_i$ are properly quadratic over $\QQ$, this is not the case:
$$\left|\frac{a}{b}-\gamma_i\right|\leq\frac{1}{b^2}\qquad\text{for infinitely many}\qquad\frac{a}{b}\in\QQ.$$
 This is, at least in principle, consistent with the equi-distribution of pre-periodic points of $\phi$ with respect to $\h_\phi$ (see \cite{st}).

\begin{figure}
\begin{center}
\caption{Minimal positive values of $\h_\phi(\alpha)/\log|c|$}\label{figure}
\setlength{\unitlength}{0.240900pt}
\ifx\plotpoint\undefined\newsavebox{\plotpoint}\fi
\sbox{\plotpoint}{\rule[-0.200pt]{0.400pt}{0.400pt}}%
\begin{picture}(1200,600)(0,0)
\sbox{\plotpoint}{\rule[-0.200pt]{0.400pt}{0.400pt}}%
\put(140.0,82.0){\rule[-0.200pt]{4.818pt}{0.400pt}}
\put(120,82){\makebox(0,0)[r]{ 0.1}}
\put(1119.0,82.0){\rule[-0.200pt]{4.818pt}{0.400pt}}
\put(140.0,130.0){\rule[-0.200pt]{4.818pt}{0.400pt}}
\put(120,130){\makebox(0,0)[r]{ 0.2}}
\put(1119.0,130.0){\rule[-0.200pt]{4.818pt}{0.400pt}}
\put(140.0,178.0){\rule[-0.200pt]{4.818pt}{0.400pt}}
\put(120,178){\makebox(0,0)[r]{ 0.3}}
\put(1119.0,178.0){\rule[-0.200pt]{4.818pt}{0.400pt}}
\put(140.0,225.0){\rule[-0.200pt]{4.818pt}{0.400pt}}
\put(120,225){\makebox(0,0)[r]{ 0.4}}
\put(1119.0,225.0){\rule[-0.200pt]{4.818pt}{0.400pt}}
\put(140.0,273.0){\rule[-0.200pt]{4.818pt}{0.400pt}}
\put(120,273){\makebox(0,0)[r]{ 0.5}}
\put(1119.0,273.0){\rule[-0.200pt]{4.818pt}{0.400pt}}
\put(140.0,321.0){\rule[-0.200pt]{4.818pt}{0.400pt}}
\put(120,321){\makebox(0,0)[r]{ 0.6}}
\put(1119.0,321.0){\rule[-0.200pt]{4.818pt}{0.400pt}}
\put(140.0,369.0){\rule[-0.200pt]{4.818pt}{0.400pt}}
\put(120,369){\makebox(0,0)[r]{ 0.7}}
\put(1119.0,369.0){\rule[-0.200pt]{4.818pt}{0.400pt}}
\put(140.0,417.0){\rule[-0.200pt]{4.818pt}{0.400pt}}
\put(120,417){\makebox(0,0)[r]{ 0.8}}
\put(1119.0,417.0){\rule[-0.200pt]{4.818pt}{0.400pt}}
\put(140.0,464.0){\rule[-0.200pt]{4.818pt}{0.400pt}}
\put(120,464){\makebox(0,0)[r]{ 0.9}}
\put(1119.0,464.0){\rule[-0.200pt]{4.818pt}{0.400pt}}
\put(140.0,512.0){\rule[-0.200pt]{4.818pt}{0.400pt}}
\put(120,512){\makebox(0,0)[r]{ 1}}
\put(1119.0,512.0){\rule[-0.200pt]{4.818pt}{0.400pt}}
\put(140.0,560.0){\rule[-0.200pt]{4.818pt}{0.400pt}}
\put(120,560){\makebox(0,0)[r]{ 1.1}}
\put(1119.0,560.0){\rule[-0.200pt]{4.818pt}{0.400pt}}
\put(140.0,82.0){\rule[-0.200pt]{0.400pt}{4.818pt}}
\put(140,41){\makebox(0,0){-70}}
\put(140.0,540.0){\rule[-0.200pt]{0.400pt}{4.818pt}}
\put(283.0,82.0){\rule[-0.200pt]{0.400pt}{4.818pt}}
\put(283,41){\makebox(0,0){-60}}
\put(283.0,540.0){\rule[-0.200pt]{0.400pt}{4.818pt}}
\put(425.0,82.0){\rule[-0.200pt]{0.400pt}{4.818pt}}
\put(425,41){\makebox(0,0){-50}}
\put(425.0,540.0){\rule[-0.200pt]{0.400pt}{4.818pt}}
\put(568.0,82.0){\rule[-0.200pt]{0.400pt}{4.818pt}}
\put(568,41){\makebox(0,0){-40}}
\put(568.0,540.0){\rule[-0.200pt]{0.400pt}{4.818pt}}
\put(711.0,82.0){\rule[-0.200pt]{0.400pt}{4.818pt}}
\put(711,41){\makebox(0,0){-30}}
\put(711.0,540.0){\rule[-0.200pt]{0.400pt}{4.818pt}}
\put(854.0,82.0){\rule[-0.200pt]{0.400pt}{4.818pt}}
\put(854,41){\makebox(0,0){-20}}
\put(854.0,540.0){\rule[-0.200pt]{0.400pt}{4.818pt}}
\put(996.0,82.0){\rule[-0.200pt]{0.400pt}{4.818pt}}
\put(996,41){\makebox(0,0){-10}}
\put(996.0,540.0){\rule[-0.200pt]{0.400pt}{4.818pt}}
\put(1139.0,82.0){\rule[-0.200pt]{0.400pt}{4.818pt}}
\put(1139,41){\makebox(0,0){ 0}}
\put(1139.0,540.0){\rule[-0.200pt]{0.400pt}{4.818pt}}
\put(140.0,82.0){\rule[-0.200pt]{240.659pt}{0.400pt}}
\put(1139.0,82.0){\rule[-0.200pt]{0.400pt}{115.150pt}}
\put(140.0,560.0){\rule[-0.200pt]{240.659pt}{0.400pt}}
\put(140.0,82.0){\rule[-0.200pt]{0.400pt}{115.150pt}}
\put(1110,513){\raisebox{-.8pt}{\makebox(0,0){$\Diamond$}}}
\put(1096,224){\raisebox{-.8pt}{\makebox(0,0){$\Diamond$}}}
\put(1082,141){\raisebox{-.8pt}{\makebox(0,0){$\Diamond$}}}
\put(1068,122){\raisebox{-.8pt}{\makebox(0,0){$\Diamond$}}}
\put(1053,230){\raisebox{-.8pt}{\makebox(0,0){$\Diamond$}}}
\put(1039,241){\raisebox{-.8pt}{\makebox(0,0){$\Diamond$}}}
\put(1025,141){\raisebox{-.8pt}{\makebox(0,0){$\Diamond$}}}
\put(1011,150){\raisebox{-.8pt}{\makebox(0,0){$\Diamond$}}}
\put(996,145){\raisebox{-.8pt}{\makebox(0,0){$\Diamond$}}}
\put(982,125){\raisebox{-.8pt}{\makebox(0,0){$\Diamond$}}}
\put(968,224){\raisebox{-.8pt}{\makebox(0,0){$\Diamond$}}}
\put(953,231){\raisebox{-.8pt}{\makebox(0,0){$\Diamond$}}}
\put(939,135){\raisebox{-.8pt}{\makebox(0,0){$\Diamond$}}}
\put(925,149){\raisebox{-.8pt}{\makebox(0,0){$\Diamond$}}}
\put(911,152){\raisebox{-.8pt}{\makebox(0,0){$\Diamond$}}}
\put(896,150){\raisebox{-.8pt}{\makebox(0,0){$\Diamond$}}}
\put(882,141){\raisebox{-.8pt}{\makebox(0,0){$\Diamond$}}}
\put(868,123){\raisebox{-.8pt}{\makebox(0,0){$\Diamond$}}}
\put(854,218){\raisebox{-.8pt}{\makebox(0,0){$\Diamond$}}}
\put(839,223){\raisebox{-.8pt}{\makebox(0,0){$\Diamond$}}}
\put(825,131){\raisebox{-.8pt}{\makebox(0,0){$\Diamond$}}}
\put(811,145){\raisebox{-.8pt}{\makebox(0,0){$\Diamond$}}}
\put(796,151){\raisebox{-.8pt}{\makebox(0,0){$\Diamond$}}}
\put(782,153){\raisebox{-.8pt}{\makebox(0,0){$\Diamond$}}}
\put(768,151){\raisebox{-.8pt}{\makebox(0,0){$\Diamond$}}}
\put(754,147){\raisebox{-.8pt}{\makebox(0,0){$\Diamond$}}}
\put(739,138){\raisebox{-.8pt}{\makebox(0,0){$\Diamond$}}}
\put(725,122){\raisebox{-.8pt}{\makebox(0,0){$\Diamond$}}}
\put(711,214){\raisebox{-.8pt}{\makebox(0,0){$\Diamond$}}}
\put(697,218){\raisebox{-.8pt}{\makebox(0,0){$\Diamond$}}}
\put(682,127){\raisebox{-.8pt}{\makebox(0,0){$\Diamond$}}}
\put(668,142){\raisebox{-.8pt}{\makebox(0,0){$\Diamond$}}}
\put(654,149){\raisebox{-.8pt}{\makebox(0,0){$\Diamond$}}}
\put(640,152){\raisebox{-.8pt}{\makebox(0,0){$\Diamond$}}}
\put(625,153){\raisebox{-.8pt}{\makebox(0,0){$\Diamond$}}}
\put(611,152){\raisebox{-.8pt}{\makebox(0,0){$\Diamond$}}}
\put(597,149){\raisebox{-.8pt}{\makebox(0,0){$\Diamond$}}}
\put(582,144){\raisebox{-.8pt}{\makebox(0,0){$\Diamond$}}}
\put(568,136){\raisebox{-.8pt}{\makebox(0,0){$\Diamond$}}}
\put(554,121){\raisebox{-.8pt}{\makebox(0,0){$\Diamond$}}}
\put(540,210){\raisebox{-.8pt}{\makebox(0,0){$\Diamond$}}}
\put(525,213){\raisebox{-.8pt}{\makebox(0,0){$\Diamond$}}}
\put(511,125){\raisebox{-.8pt}{\makebox(0,0){$\Diamond$}}}
\put(497,139){\raisebox{-.8pt}{\makebox(0,0){$\Diamond$}}}
\put(483,146){\raisebox{-.8pt}{\makebox(0,0){$\Diamond$}}}
\put(468,151){\raisebox{-.8pt}{\makebox(0,0){$\Diamond$}}}
\put(454,153){\raisebox{-.8pt}{\makebox(0,0){$\Diamond$}}}
\put(440,153){\raisebox{-.8pt}{\makebox(0,0){$\Diamond$}}}
\put(425,153){\raisebox{-.8pt}{\makebox(0,0){$\Diamond$}}}
\put(411,151){\raisebox{-.8pt}{\makebox(0,0){$\Diamond$}}}
\put(397,148){\raisebox{-.8pt}{\makebox(0,0){$\Diamond$}}}
\put(383,142){\raisebox{-.8pt}{\makebox(0,0){$\Diamond$}}}
\put(368,134){\raisebox{-.8pt}{\makebox(0,0){$\Diamond$}}}
\put(354,120){\raisebox{-.8pt}{\makebox(0,0){$\Diamond$}}}
\put(340,208){\raisebox{-.8pt}{\makebox(0,0){$\Diamond$}}}
\put(326,210){\raisebox{-.8pt}{\makebox(0,0){$\Diamond$}}}
\put(311,123){\raisebox{-.8pt}{\makebox(0,0){$\Diamond$}}}
\put(297,137){\raisebox{-.8pt}{\makebox(0,0){$\Diamond$}}}
\put(283,144){\raisebox{-.8pt}{\makebox(0,0){$\Diamond$}}}
\put(268,149){\raisebox{-.8pt}{\makebox(0,0){$\Diamond$}}}
\put(254,151){\raisebox{-.8pt}{\makebox(0,0){$\Diamond$}}}
\put(240,153){\raisebox{-.8pt}{\makebox(0,0){$\Diamond$}}}
\put(226,153){\raisebox{-.8pt}{\makebox(0,0){$\Diamond$}}}
\put(211,153){\raisebox{-.8pt}{\makebox(0,0){$\Diamond$}}}
\put(197,152){\raisebox{-.8pt}{\makebox(0,0){$\Diamond$}}}
\put(140.0,82.0){\rule[-0.200pt]{240.659pt}{0.400pt}}
\put(1139.0,82.0){\rule[-0.200pt]{0.400pt}{115.150pt}}
\put(140.0,560.0){\rule[-0.200pt]{240.659pt}{0.400pt}}
\put(140.0,82.0){\rule[-0.200pt]{0.400pt}{115.150pt}}
\end{picture}
\end{center}
\end{figure}
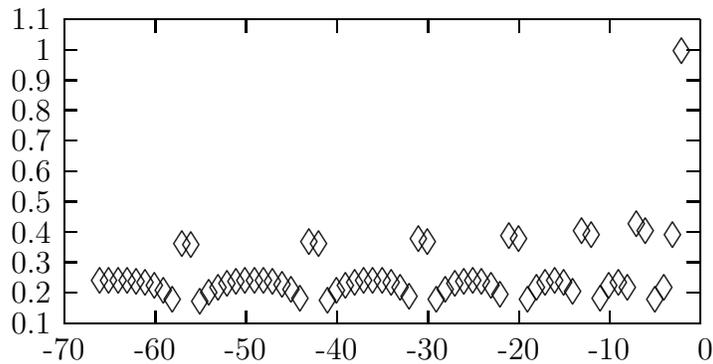

Before proving the lemma, we will justify our claim that one may, for any $\epsilon>0$, find a value of $c$ and an $\alpha\in\QQ$ such that
$$0<\h_\phi(\alpha)<\left(\frac{1}{8}+\epsilon\right)\log|c|.$$

Let $k\in\ZZ$ be positive, and let $c=-k^2-k+1$.  One may verify that $|-3k+2|>2|c|^{1/2}$ for all $k\geq 1$, and so by Lemma~\ref{lem:arch:bcalv}, we have
$$\h_\phi(-3k+2)\leq \log|-3k+2|+\log \frac{3}{2}.$$
Thus $\h_\phi(-3k+2)= \frac{1}{2}\log|c|+O(1)$.  Noting that $\phi^2(k)=-3k+2$, we have
$$\h_\phi(k)=\frac{1}{8}\log|c|+O(1).$$

\begin{proof}[Proof of Lemma~\ref{lem:computations}]  We will in fact show that the estimate holds for $\phi(z)=z^d+c$, where $d\geq 2$ and $c\in\ZZ\setminus\{0\}$.  Note that, by the discussion above, it suffices to show that
$$\left|\h_\phi(\alpha)-h(\alpha)\right|\leq \log|2c|.$$

Write $\alpha=a/b$, with $a$ and $b$ coprime integers.  Note first that
$$\phi(\alpha)=\frac{a^d+b^dc}{b^d}$$
is expressed in lowest terms.  Thus, in particular, 
$$h(\phi(\alpha))=\log\max\{b^d, |a^d+b^dc|\}.$$
It is clear that this is bounded above by
$$d\log\max\{|a|, |b|\}+\log|2c|.$$
The lower bound is slightly more troublesome.  If $|b|\geq |a|$ then
$$h(\phi(\alpha))\geq \log|b^d|=dh(\alpha),$$
so suppose $|a| > |b|$.  If $|a^d+b^dc|\geq\frac{1}{2}a^d$, then
$$h(\phi(\alpha))\geq \log|a^d+b^dc|\geq dh(\alpha)-\log 2.$$
If $\frac{1}{2}a^d<|a^d+b^dc|$ then $|b^dc|>\frac{1}{2}a^d$, and so
$$h(\phi(\alpha))\geq d\log|b|>d\log|a|-\log|2c|=dh(\alpha)-\log|2c|.$$

Thus we've shown that
$$\left|h(\alpha)-\frac{1}{d}h(\phi(\alpha))\right|\leq \frac{\log|2c|}{d}.$$
We may now compute the standard telescoping sum to estimate the canonical height:
\begin{eqnarray*}
\left|h(\alpha)-\frac{1}{d^k}h(\phi^k(\alpha))\right|&\leq&\sum_{n=0}^{k-1}\frac{1}{d^n}\left|h(\phi^n(\alpha))-\frac{1}{d}h(\phi^{n+1}(\alpha))\right|\\
&\leq& \left(1+\frac{1}{d}+\cdots+\frac{1}{d^k}\right)\frac{\log|2c|}{d}.
\end{eqnarray*}
Letting $k\rightarrow\infty$, we obtain
$$\left|\h_\phi(\alpha)-h(\alpha)\right|\leq \frac{\log|2c|}{d-1}.$$
\end{proof}

\vspace{3mm}
\noindent\textsc{Department of Mathematics\\ University of Toronto\\ Toronto, Canada}

\end{document}